\documentclass{amsart}

\usepackage{microtype}
\usepackage{lmodern}
\usepackage{amsthm}  
\usepackage[linesnumbered, ruled]{algorithm2e}  
\usepackage{mathtools}  
\usepackage[english]{babel}
\usepackage{quiver}  
\usepackage[style=alphabetic, backend=biber, maxbibnames=99, maxcitenames=99]{biblatex}
\usepackage[hidelinks]{hyperref}  

\bibliography{linear}

\newtheorem{theorem}{Theorem}[section]  
\newtheorem{lemma}[theorem]{Lemma}      
\newtheorem{proposition}[theorem]{Proposition}
\newtheorem{corollary}[theorem]{Corollary}
\theoremstyle{definition}
\newtheorem{definition}[theorem]{Definition}

\theoremstyle{remark}
\newtheorem{remark}[theorem]{Remark}

\numberwithin{equation}{section}

\newcommand{\tallvec}[1]{
  \boxed{
    \vphantom{\rule[-2em]{0pt}{5em}} 
    \, #1 \, \cdots 
  }
}

\newcommand{\sqbox}[1]{
  \fbox{
    \parbox[c][1.6cm][c]{1.6cm}{\centering $#1$}
  }
}

\newcommand{\ChainGcd}{\textsc{ChainGcd}}
\newcommand{\LlcmTree}{\textsc{LlcmTree}}
\newcommand{\MorphismKernelInvariants}{\textsc{MorphismKernelInvariants}}
\newcommand{\ModuleOfPointsInvariants}{\textsc{ModuleOfPointsInvariants}}
\newcommand{\TorsionFromModuleOfPoints}{\textsc{TorsionFromModuleOfPoints}}
\newcommand{\MultipointEvaluation}{\textsc{MultipointEvaluation}}

\newcommand{\FunctionName}[5]{
  \[
    \begin{array}{rrcl}
      #1 : & #2 & \to     & #3 \\
           &   #4 & \mapsto & #5
    \end{array}
  \]
}
\newcommand{\FunctionNoname}[4]{
  \[
    \begin{array}{rcl}
      #1 & \to     & #2 \\
      #3 & \mapsto & #4
    \end{array}
  \]
}
\newcommand{\FunctionNameNodef}[3]{
  \[
    \begin{array}{rrcl}
      #1 : & #2 & \to & #3
    \end{array}
  \]
}

\SetEndCharOfAlgoLine{\unskip.}

\newcommand{\PM}{\mathbf{PM}}
\newcommand{\PD}{\mathbf{PD}}
\newcommand{\PG}{\mathbf{PG}}
\newcommand{\EV}{\mathbf{EV}}
\newcommand{\MM}{\mathbf{MM}}

\newcommand{\functiongeq}[1]{{#1}^{\geqslant 1}}
\newcommand{\SM}{\mathbf{SM}}
\newcommand{\SMA}{\mathbf{SM}_\mathrm{A}}
\newcommand{\SMF}{\mathbf{SM}_\mathrm{F}}
\newcommand{\SMgeq}{\functiongeq{\SM}}
\newcommand{\SMAgeq}{\functiongeq{\SMA}}
\newcommand{\SMFgeq}{\functiongeq{\SMF}}

\newcommand{\Otilde}{\widetilde{O}}

\newcommand{\RRii}{[1, \infty[}
\newcommand{\NNs}{\ZZ_{>0}}

\AtBeginDocument{%
  \renewcommand{\Im}{\mathrm{Im}}%
}

\DeclareMathOperator{\llcm}{llcm}
\DeclareMathOperator{\rgcd}{rgcd}

\newcommand{\Ktau}{{K\{\tau\}}}
\newcommand{\M}{{\mathbb M}}

\newcommand{\Fq}{{\mathbb{F}_q}}
\newcommand{\Kbar}{{\overline{K}}}

\newcommand{\NN}{\mathbb{Z}_{\geqslant 0}}
\newcommand{\ZZ}{\mathbb{Z}}

\renewcommand{\d}{\mathfrak d}
\newcommand{\E}{\mathcal E}
\newcommand{\B}{\mathcal B}
\DeclareMathOperator{\Id}{Id}
\DeclareMathOperator{\Fitt}{Fitt}

\newcommand{\MphiT}{M_{\phi_T}}

\DeclareMathOperator{\Fnf}{F}

\newcommand{\stor}[1]{\MM(#1)(\log #1)(\log \log #1)}

\begin{document}


\title[Computing points of general Drinfeld modules]
      {Computing submodules of points of general Drinfeld modules over finite fields}

\author{Antoine Leudière}
\address{University of Calgary}
\email{antoine.leudiere@ucalgary.ca}


\author{Renate Scheidler}
\email{rscheidl@ucalgary.ca}
\address{University of Calgary}


\begin{abstract}

  We present an algorithm for computing the structure of any submodule of the
  module of points of a Drinfeld $A$-module over a finite field, where $A$ is a
  function ring over $\Fq$. When the function ring is $A = \Fq[T]$, we
  additionally compute a Frobenius decomposition of said submodule. Our
  algorithms apply in particular to kernels of isogenies and torsion
  submodules. They are presented within the frameworks of Frobenius normal
  forms, presentations of modules, and Fitting ideals. They rely largely on
  efficient and classical linear algebra methods, combined with fast arithmetic
  of Ore polynomials. We analyze the complexity of our algorithms, explore
  optimizations, and provide an implementation in SageMath. Finally, we compute
  a simple invariant attached to a Drinfeld $\Fq[T]$-module that encodes all
  the polynomials in $\Fq[T]$ whose associated torsion is rational.

\end{abstract}

\maketitle

\section{Introduction}

Drinfeld modules are among the most important tools in function field
arithmetic. Rank two Drinfeld modules bear close resemblance to elliptic curves and the
theory of complex multiplication, rank one Drinfeld modules are analogous to
roots of unity and the Kronecker-Weber theorem, but Drinfeld modules of higher
rank have no direct analogue in zero characteristic. Drinfeld modules were
introduced in 1977 to solve the Langlands conjectures for $\mathrm{GL}_r$ in
function fields \cite{drinfeld_commutative_1977, lafforgue_chtoucas_2002}, with
remarkable success. In the context of more practical applications, Drinfeld modules
can be used for state-of-the-art factorization of polynomials over finite
fields \cite{doliskani_drinfeld_2021} and computer algebra of linear polynomials
\cite{bastioni_characteristic_2025}. More recently, Drinfeld modules were discovered to be extremely well suited to applications in coding theory \cite{bastioni_optimal_2024, micheli_rank_2026}.
In light of the prominent technical and historical role of function fields and
algebraic curves in coding theory \cite{goppa_algebraico-geometric_1983}, Drinfeld modules offer a promising approach for significantly advancing 
the state-of-the-art in algebraic geometry and rank-metric codes.

Despite these successful applications of Drinfeld modules, research on
their computational aspects lags far behind its counterpart for
elliptic curves. The works of Caranay, Greenberg and Scheidler in 2018 were
among the first to explicitly inquire about algorithmic aspects of Drinfeld
modules, focusing only on ordinary rank two Drinfeld modules
\cite{caranay_computing_2018, brenner_computing_2020}. Musleh and Schost subsequently deployed modern computer algebra
techniques to compute the characteristic polynomial of the Frobenius endomorphism for
rank two Drinfeld modules over a finite field  \cite{musleh_computing_2023}. The rank two assumption made it possible to adapt methods from elliptic curves to Drinfeld modules. This rather restrictive assumption was only lifted as recently as 2023 by Musleh and Schost \cite{musleh_computing_2023} and Caruso and Leudière
\cite{caruso_algorithms_2026} with the introduction of entirely
new techniques. Importantly, some of these methods and tools, such as Anderson
motives for computational purposes, have no direct computationally efficient
analogue for elliptic curves. Our work herein continues in this direction. 

In \cite{caruso_algorithms_2026}, Caruso and Leudière computed norms of isogenies
and characteristic polynomials of endomorphisms, the function field equivalent
of point counting for elliptic curves. We build on this work and present an efficient general algorithm for computing the module structure, \emph{via} the invariant factors, of the kernel of
a morphism of Drinfeld modules over a finite field. In particular, the special case of the zero morphism recovers the structure of the full module of rational
points. Our method takes a different approach from prior work; in particular, it is not rooted in elliptic curve machinery. While  relying on classical tools such as Frobenius and Smith normal forms,
presentations of modules, and Fitting ideals, it leverages features specific to
Drinfeld modules and state-of-the-art techniques in computer algebra. Most
importantly, we take advantage of the structure of finitely generated
$\Fq$-algebra of the function ring $A$ of a Drinfeld module. This has no analogue in the realm of elliptic curves, where the role of the function ring is played by the integers~$\ZZ$. Our algorithm applies to  Drinfeld $A$-modules of arbitrary rank over any finite field for any function ring $A$.  

In the case $A = \Fq[T]$, we further give an algorithm for computing Frobenius decompositions of submodules of the module of points (Algorithm~\ref{algo:MorphismKernelInvariants} and provide a precise
asymptotic complexity analysis. Fixing $q$ relative to other
parameters, this algorithm computes the
invariant factors and a Frobenius decomposition of the kernel of a morphism for
a cost of $\Otilde(dr + dn + d^\omega)$, where $n$ is the $\tau$-degree of the
input morphism , $r$ is the rank of its domain Drinfeld module, $d$ is the $\Fq$-dimension of
the base field, and $2\leqslant \omega \leqslant 3$ is a feasible exponent for
matrix multiplication (Proposition~\ref{proposition:cost-keru}). Other precise
complexity statements are given in the article. 

Finally, given a Drinfeld $\Fq[T]$-module over a finite field, we use Anderson
motives to compute the
unique monic element $g\in\Fq[T]$ such that, for $a \in \Fq[T]$, the $a$-torsion is
rational if and only if $a$ divides $g$. The invariant $g$ is the lcm of all
such $a$, and we compute it without prior knowledge of the invariant factors of
the module of rational points. We are unaware of any efficient method to accomplish the same task for elliptic curves.

Our main algorithms are implemented in SageMath \cite{sagemath, MR4624252}; the code is accessible on
GitHub and is accompanied by a \emph{Jupyter} notebook that readers can readily run in any web browser. All code and instructions are available at \url{https://github.com/kryzar/research-drinfeld-submodules/}.

To the best of our knowledge, ours is the first work to address the problem of effectively and efficiently computing
invariant factors and Frobenius decompositions of submodules of points of a Drinfeld module. 
For elliptic curves, the best algorithms to compute invariant factors are
generic abelian group algorithms. For example, the method of Sutherland
\cite{sutherland_structure_2011} proceeds by computing a basis via 
via extracting discrete logarithms on subgroups of prime power order. While discrete
logarithms can be readily computed for Drinfeld modules
\cite{scanlon_public_2001}, adapting the method of Sutherland to this setting would yield an
algorithm significantly that is generally slower than ours, both in terms of asymptotic complexity analysis
and implementation. Moreover, our method has the advantages of being deterministic, easily implemented, and close to optimal.

%

\newpage
The paper is organized as follows.
\begin{enumerate}
  
  \item Section~\ref{section:background} presents the necessary background. We
    briefly introduce Ore polynomials (\S~\ref{back:ore}) and Drinfeld modules
    (\S~\ref{back:drinfeld}). We review the computer algebra of matrices,
    polynomials and Ore polynomials, with an emphasis on complexity models
    (\S~\ref{back:computer}). Finally, we present a general framework for
    computing invariant factors and Frobenius decompositions of finite modules
    over a Dedekind ring that that are uniquely defined by a finite number of
    matrices (\S~\ref{sec:modules-from-matrices}). This framework readily applies to Drinfeld
    modules. In particular, in \S~\ref{subsubsec:case-FqT-abstract} we
    show how the invariant factors and a Frobenius decomposition of the kernel
    of a morphism can be recovered by computing the Frobenius normal form and a
    change-of-basis matrix of the action of the Drinfeld module on this kernel.
    This approach does not extend to the general case, where we need to resort
    to Fitting ideals to recover invariant factors
    (\S~\ref{subsection:general-case}).

  \item Section~\ref{sec:computer-algebra} describes state-of-the-art multipoint evaluation
    (\S~\ref{subsec:computer-ore}) and revisits algorithms related to divisibility chains of polynomials
    (\S~\ref{subsec:div-chains}).

  \item Section~\ref{applications-drin} presents our main algorithms. Once
    again, we distinguish between the case $A = \Fq[T]$
    (\S~\ref{subsec:comput-case-Fq[T]}) and the general case
    (\S~\ref{subsec:comput-general}). Still in the case $A = \Fq[T]$, we present
    alternative methods for computing torsion submodules (\S~\ref{subsubsec:special-torsion}),
    and discuss the efficiency of our algorithms depending on different low
    level primitives (\S~\ref{subsec:discuss}).

  \item Section~\ref{sec:deciding-rational} presents a simple method for
    finding all $a \in \Fq[T]$ such that the $a$-torsion is rational, given a
    Drinfeld $\Fq[T]$-module over a finite field.

\end{enumerate}

\section{Background}
\label{section:background}

\subsection{Invariant factors and Frobenius decompositions}

We begin by defining the quantities that encode the structure of a finite
module over a Dedekind ring. Our goal is to compute these quantities for
submodules of points of Drinfeld modules over finite fields. 

\begin{theorem}
\label{def:inv-frob}

  Let $A$ be a Dedekind domain and $W$ a finite $A$-module. There exist ideals $\d_1, \dots, \d_\ell \subset A$ and elements
  $x_1,\dots, x_\ell \in W$ such that the following hold.

  \begin{enumerate}

    \item The elements generate the module: $W = A x_1 \oplus\cdots\oplus A x_\ell$;
    \item The ideals form a divisibility chain: $\d_1 \mid \cdots \mid \d_\ell$;
    \item For all $i$, $x_i$ has annihilator $\d_i$, \emph{i.e.} $A x_i \simeq
      A/\d_i$;

  \end{enumerate}
  In particular, $W \simeq \prod_{i=1}^n A/\d_i$, and the chain $(\d_1, \dots,
  \d_\ell)$ is unique. We call the ideals $\d_1,\dots,\d_\ell$ the \emph{invariant
  factors} of $W$ and the elements $x_1, \dots, x_\ell$ a \emph{Frobenius
  decomposition} of $W$. 

\end{theorem}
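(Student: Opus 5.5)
We reduce to the structure theory of finitely generated torsion modules over a principal ideal domain, via localization at the finitely many primes in the support. Since $W$ is finite, its annihilator $I = \mathrm{Ann}_A(W)$ is a nonzero ideal. If $W=0$ we take $\ell=0$, so assume $W\neq 0$ and factor $I = \mathfrak p_1^{e_1}\cdots\mathfrak p_s^{e_s}$ into distinct nonzero primes. By the Chinese remainder theorem, $A/I \simeq \prod_j A/\mathfrak p_j^{e_j}$, and since $W$ is an $A/I$-module this gives $W = \bigoplus_j W_{\mathfrak p_j}$, where $W_{\mathfrak p_j} = e_j W$ for the corresponding idempotents $e_j$. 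Each $W_{\mathfrak p}$ is naturally a module over $A/\mathfrak p^{e} \simeq A_{\mathfrak p}/\mathfrak p^e A_{\mathfrak p}$. Since $A$ is Dedekind, $A_{\mathfrak p}$ is a discrete valuation ring, and hence a PID.

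Applying the structure theorem for finitely generated modules over the PID $A_{\mathfrak p}$ to each primary part, we get
\[
W_{\mathfrak p} \simeq \bigoplus_{k=1}^{m_{\mathfrak p}} A/\mathfrak p^{a_{\mathfrak p,k}}, \qquad a_{\mathfrak p,1}\geqslant a_{\mathfrak p,2}\geqslant\cdots,
\]
with generators $y_{\mathfrak p,k}\in W_{\mathfrak p}$. We then set $\ell=\max_{\mathfrak p} m_{\mathfrak p}$ and pad each sequence with zero exponents, using $y=0$ for the padded entries. We define
\[
\d_{\ell+1-k}=\prod_{\mathfrak p}\mathfrak p^{a_{\mathfrak p,k}}, \qquad x_{\ell+1-k}=\sum_{\mathfrak p} y_{\mathfrak p,k}.
\]
The exponents are monotone in $k$, which gives the divisibility chain $\d_1\mid\cdots\mid\d_\ell$.

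To see that $A x_i \simeq A/\d_i$, note that $\mathrm{Ann}(x_i)=\bigcap_{\mathfrak p}\mathrm{Ann}(y_{\mathfrak p,k}) = \d_i$, because the ideals $\mathfrak p^{a}$ for distinct primes are pairwise coprime. For the direct sum, CRT shows that $A x_i = \bigoplus_{\mathfrak p} A y_{\mathfrak p,k}$; the idempotent $e_{\mathfrak p}$ recovers $y_{\mathfrak p,k}$ from $x_i$. Summing over $i$ then gives $W = \bigoplus_i A x_i$. The resulting isomorphism $W\simeq\prod_i A/\d_i$ is immediate from these two facts.

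For uniqueness, suppose $(\d_i)$ is any chain of nonzero ideals with $W\simeq\prod_i A/\d_i$, allowing some $\d_i = A$. Localizing at each $\mathfrak p$ gives $W_{\mathfrak p}\simeq\prod_i A_{\mathfrak p}/\d_iA_{\mathfrak p}$. By uniqueness in the DVR structure theorem, the multiset of the $\mathfrak p$-adic valuations of the $\d_i$ is determined for every $\mathfrak p$. Alternatively, these valuations are read off from the dimensions of $\mathfrak p^{k}W_{\mathfrak p}/\mathfrak p^{k+1}W_{\mathfrak p}$ over $A/\mathfrak p$. Since the chain is divisibility-ordered, the valuations are monotone in $i$, and so the chain is determined once the trivial factors $\d_i=A$ are discarded.

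The main obstacle is this bookkeeping around trivial factors and the length $\ell$. Uniqueness only holds once we impose $\d_i\neq A$, as $\ell=\max_{\mathfrak p} m_{\mathfrak p}$ does in our construction; this convention should be stated explicitly. The remaining steps are standard and route entirely through the local PID case.
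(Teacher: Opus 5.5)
The paper gives no proof of this theorem: it is quoted as standard background (the structure theorem for finitely generated torsion modules over a Dedekind domain), so there is no argument of the paper's to compare yours with. Your proof is correct and is the standard one. You split $W$ into primary parts by the Chinese remainder theorem, apply the structure theorem over the discrete valuation ring $A_{\mathfrak p}$ to each part, and recombine the $k$-th largest cyclic summands across primes. A few points deserve a line each.

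\emph{Where finiteness is used.} The claim $\mathrm{Ann}_A(W)\neq 0$ is exactly where the hypothesis enters. For $w\in W$ the ideal $\mathrm{Ann}(w)$ has finite index in $A$. Also $A$ is infinite, since it is not a field and a finite domain is a field. Hence $\mathrm{Ann}(w)\neq 0$, and $\mathrm{Ann}(W)\supseteq\prod_{w\in W}\mathrm{Ann}(w)\neq 0$. For merely finitely generated $W$ the theorem fails: a non-principal ideal of $A$ is not a direct sum of cyclic modules.

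\emph{Length in the uniqueness step.} Make explicit why $\ell$ is forced once trivial factors are discarded. The ideal $\d_1\neq A$ lies in some prime $\mathfrak p$, which then divides every $\d_i$, so $\ell=m_{\mathfrak p}=\max_{\mathfrak q}m_{\mathfrak q}$. After that, each non-decreasing valuation sequence of length $\ell$ is determined by its multiset of positive entries.

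\emph{The convention $\d_i\neq A$.} You are right that the statement as printed needs this convention for uniqueness. Also, its $\prod_{i=1}^n$ should read $\prod_{i=1}^\ell$.

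\emph{An alternative route to uniqueness.} This one fits the paper's later use of Fitting ideals (Lemma~\ref{lemma:inv-fit}). Use $\Fitt_j(M\oplus N)=\sum_{a+b=j}\Fitt_a(M)\Fitt_b(N)$, together with $\Fitt_0(A/\d)=\d$ and $\Fitt_j(A/\d)=A$ for $j\geqslant 1$, and the chain condition. These give $\Fitt_j(W)=\d_1\cdots\d_{\ell-j}$ for $0\leqslant j\leqslant\ell$. So $\ell$ is the number of $j$ with $\Fitt_j(W)\neq A$, and $\d_i=\Fitt_{\ell-i}(W)\,\Fitt_{\ell-i+1}(W)^{-1}$ by cancellation of invertible ideals. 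This trades your localization and the DVR uniqueness theorem for the invariance of Fitting ideals.
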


\subsection{Ore polynomials}
\label{back:ore}

Ore polynomials are the main ingredient in the definition of
Drinfeld modules and their isogenies. Throughout, let $\Fq$ be a finite field
with $q$ elements. Let $K$ be a field extension of $\Fq$, and let $\tau$ be the
$\Fq$-linear Frobenius endomorphism of $\Kbar$ defined by $\tau(x) = x^q$.

\begin{definition}
    The set
  \[
    \Ktau \coloneqq \left\{\sum_{i=1}^n x_i\tau^i, \; n \in \NN, \; x_1,\dots, x_n\in
    K\right\}
  \]
 is an $\Fq$-algebra under composition and addition, called the $\Fq$-algebra of \emph{Ore polynomials} (relative to $\Fq$ and
  $K$).

\end{definition}

Although multiplication of Ore polynomials is usually noncommutative, Ore
polynomials retain many key features of classical polynomials. Specifically, we have the
following:

\begin{enumerate}

  \item The \emph{$\tau$-degree} of a nonzero Oe polynomial $f = \sum_{i=1}^n x_i \tau^i \in
    \Ktau$, denoted $\deg_\tau(f)$, is the maximal~$i$ such that $x_i \neq
    0$. The \emph{$\tau$-valuation} is the minimum~$i$ such that $x_i \neq 0$.

  \item The kernel of $f\in \Ktau$ is an $\Fq$-vector space denoted $\ker
    f$. If $f$ is nonzero, then $\dim_\Fq (\ker (f)) \leqslant \deg_\tau(f)$.
    The \emph{roots} of $f$ may live in $\Kbar$, and we write $\ker_K (f)
    \coloneqq \ker (f) \cap K$.

  \item An Ore polynomial $f\in\Ktau$ is said to be \emph{separable} if
    $\dim_\Fq(\ker(f)) = \deg_\tau(f)$ and \emph{inseparable} if
    $\dim_\Fq(\ker(f)) < \deg_\tau(f)$.

  \item \label{item:euclidean} The algebra $\Ktau$ of Ore polynomial is a
    right-euclidean domain for the $\tau$-degree. For any $f, g\in \Ktau$,
    there exist $\alpha,\beta \in \Ktau$ such that $f = \alpha g + \beta$ and
    $\deg_\tau(\beta) < \deg_\tau(g)$.
    Therefore, for any $f,g\in\Ktau$ not both zero, we can define $\rgcd(f, g)$
    as the unique monic Ore polynomial such that $\Ktau f + \Ktau g = \Ktau
    \rgcd(f, g)$, and we define $\llcm(f, g)$ to be the unique monic Ore polynomial such
    that $\Ktau f \cap \Ktau g = \Ktau \llcm(f, g)$.

\end{enumerate}

\subsection{Drinfeld modules}
\label{back:drinfeld}

We now turn to the definition of Drinfeld modules and related objects, including isogenies and modules of points. The reader is referred to \cite{papikian_drinfeld_2023} or
\cite{armana_computational_2025} for a general introduction to the subject. We begin by
establishing notation and terminology. Throughout, let $C$ be a smooth projective geometrically
connected curve over $\Fq$ with a geometric closed point $\infty$. Let $A$ be
the ring of functions of $\Fq(C)$ that are regular outside $\infty$. Then $A$ is a
Dedekind domain called a \emph{function ring}. Let $K$ be a field extension of
$\Fq$, equipped with a morphism of $\Fq$-algebras $\gamma: A \to K$. The pair
$(K, \gamma)$ is called an \emph{$A$-field}. Let $\Kbar$ be an algebraic
closure of $K$.

\begin{definition}
\label{definition:dm}

  A \emph{Drinfeld $A$-module over $(K, \gamma)$} is a morphism of
  $\Fq$-algebras
    \FunctionName
    {\phi}
    {A}
    {\Ktau}
    {a}
    {\phi_a,}
  such that $\Im(\phi) \not\subset K$ and for each $a\in A$, the constant
  coefficient of $\phi_a$ is $\gamma(a)$.

\end{definition}

Drinfeld modules, despite their names, are not modules, but $\Fq$-algebra
homomorphisms. However, they give rise to module structures on extensions
$L/K$.

\begin{definition}
\label{definition:module-of-points}

  For any extension $L/K$, the \emph{module of $L$-points of a Drinfeld module $\phi$}, denoted $\phi(L)$, is the
  $A$-module defined by
  \FunctionNoname
    {A \times L}
    {L}
    {(a, z)}
    {\phi_a(z).}
  The elements of $\phi(K)$ are called \emph{rational points}.
\end{definition}


The underlying set of $\phi(L)$ is the same for all Drinfeld modules, but its structure as an $A$-module depends on $\phi$. Our goal in this paper
is to compute the invariant factors of any $A$-submodule of $\phi(K)$, when $K$
is finite. When $A = \Fq[T]$, we also give a Frobenius decomposition of any
submodule of $\phi(L)$ (Algorithm~\ref{algo:MorphismKernelInvariants}). To represent these
submodules, we introduce morphisms and isogenies.

\begin{definition}

  Let $\phi, \psi$ be two Drinfeld $A$-modules over $(K,\gamma)$. A
  \emph{morphism from $\phi$ to $\psi$} is an Ore polynomial $u \in \Ktau$ such that
  $u\phi_a = \psi_a u$ for every $a \in A$. An \emph{isogeny} is a nonzero
  morphism.

\end{definition}

The kernel of a morphism $u$ with domain $\phi$ is a finite $A$-submodule of
$\phi(\Kbar)$. To avoid ambiguity, we write $\phi(\ker(u))$ for $\ker(u)$ when viewed 
as an $A$-module. Conversely, every finite $A$-submodule of $\phi(K)$ is
the kernel of an isogeny starting from $\phi$ that can be effectively computed
\cite[Proposition~3.2]{armana_computational_2025}. Therefore, our aim is to
compute invariant factors and Frobenius decompositions of kernels of morphisms.

An important invariant attached to every Drinfeld module is its \emph{rank}.

\begin{lemma}
\label{def:rank}

  Let $\phi$ be a Drinfeld $A$-module over $(K, \gamma)$. There exists a unique
  integer $r \geqslant 1$ such that for all $a \in A$, $\deg_\tau(\phi_a) = r
  \dim_{\Fq}(A/aA)$. The integer $r$  is called the \emph{rank} of $\phi$.

\end{lemma}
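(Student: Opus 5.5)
Uniqueness is immediate once existence is known: if $r$ and $r'$ both work, take any $a\in A$ with $\phi_a\notin K$. Then $a$ is nonconstant, so $\dim_\Fq(A/aA)\geqslant 1$. The identity $r\dim_\Fq(A/aA)=\deg_\tau(\phi_a)=r'\dim_\Fq(A/aA)$ forces $r=r'$. The work is therefore in existence.

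For existence, I would use the degree function at $\infty$. For nonzero $a\in A$, set $\deg(a)\coloneqq -d_\infty v_\infty(a)$, where $d_\infty=[\Fq(\infty):\Fq]$. It is a standard fact, from the product formula or Riemann--Roch, that $\dim_\Fq(A/aA)=\deg(a)$. This holds because the principal divisor of $a$ has degree zero, the zeros of $a$ all lie in $\operatorname{Spec}A$, and $\dim_\Fq(A/aA)$ equals the degree of the zero divisor of $a$. With this, the statement becomes: the function $a\mapsto \deg_\tau(\phi_a)$ is proportional to $\deg$ with a positive integer constant.

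Next, consider $D(a)\coloneqq\deg_\tau(\phi_a)$ for $a\neq 0$. Since $\Ktau$ is a domain and degrees add under composition, $\phi$ is injective: its image is not contained in $K$, so the kernel is a proper ideal of $A$. A nonzero prime kernel would make $\phi(A)$ a finite field inside $\Ktau$, and the only finite subfields of the domain $\Ktau$ lie in $K$, which is impossible. Hence $D(ab)=D(a)+D(b)$. Also $D(a+b)\leqslant\max(D(a),D(b))$, with $D=0$ exactly on $\Fq^\times$, because $\gamma$ is injective on $\Fq$ and a degree-zero $\phi_a$ commuting with a nonconstant element forces $a$ to be constant. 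So $-D$ extends to a discrete valuation on $\Fq(C)$ that is trivial on $\Fq$ and nonpositive on $A$. A valuation of $\Fq(C)$ that is nonpositive on $A$ must be centered outside $\operatorname{Spec}A$, hence at $\infty$. Therefore $D=c\cdot\deg$ for some rational $c>0$.

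It remains to show that $c$ is an integer, and I expect this to be the main obstacle. The natural approach is to pick an $a$ with $\phi_a$ separable, for instance when $\gamma(a)\neq 0$. Then $\phi[a]$ has $q^{D(a)}$ elements and is an $A/aA$-module. One would show that it is free of some rank $r$, which would give $D(a)=r\dim_\Fq(A/aA)$. Proving freeness requires the usual torsion-structure argument. The more elementary alternative is to work with $\gcd$s of degrees. It suffices to choose $a,b$ with coprime $\deg$ values: $c\deg(a)$ and $c\deg(b)$ are integers and $\gcd(\deg a,\deg b)$ may be taken to be $d_\infty$, which gives $c\,d_\infty\in\ZZ$. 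To close the remaining gap I would fall back on the torsion-module argument, namely that $\phi[\mathfrak p]$ is an $A/\mathfrak p$-vector space for a prime $\mathfrak p$ away from the characteristic. Its $\Fq$-dimension is then a multiple of $\dim_\Fq A/\mathfrak p$, which gives $c\in\ZZ$. I would cite \cite{papikian_drinfeld_2023} for this step.
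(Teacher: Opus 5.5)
The paper states this lemma without proof, as standard background (the surrounding text points to Papikian's book), so your sketch has to stand on its own. Uniqueness, the injectivity of $\phi$, and the valuation step are fine. The valuation step actually gives more than you extract from it: $-D$ is a nontrivial $\ZZ$-valued valuation with the same valuation ring as $v_\infty$, so $-D = m\,v_\infty$ for an integer $m \geqslant 1$, that is, $c = m/d_\infty$. The detour through elements of coprime degree is therefore redundant. The entire remaining content of the lemma is $d_\infty \mid m$, which is vacuous for $A = \Fq[T]$. For the same reason you need not show separately that $D$ vanishes only on $\Fq^\times$; it follows from $-D = m\,v_\infty$. Your commuting argument only shows that a constant $\phi_a$ lies in some $\mathbb{F}_{q^n}$, not in $\Fq$.

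The gap is in the final step. Put $\deg\mathfrak p \coloneqq \dim_\Fq(A/\mathfrak p)$. Knowing that $\phi[\mathfrak p]$ is an $A/\mathfrak p$-vector space gives $\dim_\Fq \phi[\mathfrak p] = s \deg \mathfrak p$ with $s \in \ZZ$. This constrains $c$ only if you also know $\dim_\Fq\phi[\mathfrak p] = c\deg\mathfrak p$, and you never establish that. For nonprincipal $\mathfrak p$, the module $\phi[\mathfrak p]$ is not the kernel of any single $\phi_a$, so its size is not governed by $D$. For a principal prime $\mathfrak p = aA \neq \ker\gamma$ your one-liner does work, since $\#\phi[a] = q^{D(a)}$ by separability. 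But the existence of such primes needs Chebotarev in the Hilbert class field.

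The missing link can be supplied in two steps. (i) For $b \in \mathfrak p \setminus \mathfrak p^2$, write $bA = \mathfrak p \mathfrak b$ with $\mathfrak b + \mathfrak p = A$. Since $\phi_b$ is surjective on $\Kbar$ and $\phi[\mathfrak p^{k+1}\mathfrak b] = \phi[\mathfrak p^{k+1}] \oplus \phi[\mathfrak b]$, the map $\phi_b \colon \phi[\mathfrak p^{k+1}] \to \phi[\mathfrak p^k]$ is onto. Its kernel is $\phi[\mathfrak p^{k+1} + bA] = \phi[\mathfrak p]$, hence $\#\phi[\mathfrak p^k] = \#\phi[\mathfrak p]^k$. (ii) The class group of $A$ is finite, so $\mathfrak p^h = aA$ for some $h \geqslant 1$. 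Choosing $\mathfrak p \neq \ker\gamma$ forces $\gamma(a) \neq 0$, so $\phi_a$ is separable and $q^{D(a)} = \#\phi[a] = \#\phi[\mathfrak p]^h = q^{sh\deg\mathfrak p} = q^{s\deg a}$. Hence $c = s \in \ZZ$.

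This argument is equivalent to the freeness $\phi[\mathfrak p^k] \simeq (A/\mathfrak p^k)^s$ that you allude to, and it is exactly the content you deferred to a citation. As written, the decisive step is asserted rather than proved.
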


\begin{remark}

  Defining a Drinfeld $A$-module amounts to defining an injection $A\to\Ktau$.
  It is not clear how to do so, and such injections may not exist at all
  \cite[Remark~4.3]{papikian_drinfeld_2023}. On the other hand, the case $A =
  \Fq[T]$ (corresponding to $C = \mathbb P^1_{\Fq}$) is straightforward: a
  Drinfeld $\Fq[T]$-module $\phi$ is uniquely defined by the image~$\phi_T$ of~$T$, and its rank is simply $\deg_\tau(\phi)$.

\end{remark}

Finally, a particular topic of interest, for example for computing Tate
modules, Weil pairings and, more recently, codes in rank metric
\cite{bastioni_optimal_2024, micheli_rank_2026}) is the computation of the
kernel of $\phi_a$ for $a \in A$.

\begin{definition}

  Let $a\in A$. The \emph{$a$-torsion} of $\phi$ is the kernel of the morphism~$\phi_a$. It is an $A$-submodule of $\phi(\Kbar)$, denoted $\phi[a] \coloneqq
  \phi(\Kbar)[a]$. The $a$-torsion of $\phi$ contained in $L$ is $\phi(L)[a]
  \coloneqq \phi[a]\cap\phi(L)$. The $a$-torsion of $\phi$ is said to be
  \emph{$L$-rational} if $\phi(L)[a] = \phi[a]$.

\end{definition}

%
%

%
%

\subsection{Computer algebra}
\label{back:computer}

We now specify our complexity model and recall classical computer algebra
reductions for polynomials and matrices in \S~\ref{subsubsec:cost-pol-mat}. We
particularly emphasize fast primitives for Ore polynomials
(\S~\ref{subsubsec:cost-ore}, \S~\ref{subsubsec:divide-and-conquer}).

\subsubsection{Cost functions for polynomials and matrices}
\label{subsubsec:cost-pol-mat}

We start by defining cost functions. Let $F$ be a field. Two polynomials in
$F[T]$ with degrees $\leqslant n$ can be multiplied for a cost of $O(\PM(n))$
operations in $F$. We assume that $\PM$ is \emph{super-additive}, \emph{i.e.} $\PM(x+y) \leqslant \PM(x) + \PM(y)$ for all inputs $x, y$. When $F$ is a
finite field, we can pick $\PM(n) = n \log n \log \log n$
\cite[\S~8.3]{von_zur_gathen_modern_2013} (see also \cite{10.1145/3005344} for
a slightly better complexity). Moreover:

\begin{itemize}

  \item The Euclidean division of two polynomials in $F[T]$ with respective
    degrees $m$ and $n$, $m \geqslant n$, can be performed for a cost of
    $O(\PD(m, n))$ operations in $F$. We can pick $\PD(m, n) = O(\PM(m - n) +
    \PM(n))$ \cite[\S~11]{von_zur_gathen_modern_2013}.


  \item The gcd of two polynomials with degrees $\leqslant n$ can be computed
    for a cost of $O(\PG(n))$ operations in $F$. We can pick $\PG(n) = \PM(n)
    \log n$ \cite[\S~11]{von_zur_gathen_modern_2013} and assume that
    $\PG(n) \in O(n^2)$.

\end{itemize}

Next, we introduce cost functions for matrix arithmetic. The multiplication of two matrices in $F^{d\times d}$
can be performed for a cost of $O(\MM(d))$ operations in $F$. In particular, we
let $2 \leqslant \omega \leqslant 3$ be such that $\MM(d) = d^\omega$.

\subsubsection{Ore polynomials}
\label{subsubsec:cost-ore}

Almost all our algorithms rely on low level primitives on Ore polynomials:
Euclidean division, llcm and rgcd computation, and multipoint evaluation. Let
$K$ be a finite extension of $\Fq$ with $[K:\Fq] = d$. The state of the art for
computer algebra of Ore polynomials in $\Ktau$ is due to Caruso and Le Borgne
\cite{caruso_new_2017}. One issue with their computation model is the
assumption that all operations in $K$, including applications of the Frobenius
endomorphism (\emph{i.e.} computing $x^q$ for $x\in K$), can be performed for a
cost of $\Otilde(d)$ operations in~$\Fq$. This assumption does not always hold (see \cite{musleh_computing_2023} for a discussion on this topic).
Therefore, we count arithmetic operations and applications of the Frobenius
endomorphism separately.

\begin{definition}

  We let
  \FunctionNameNodef
    {\SM = (\SMA, \SMF)}
    {\NNs^2}
    {\RRii}
  be a function such that two Ore polynomials with degrees $\leqslant n$ can be
  multiplied for a cost of $O(\SMA(n))$ arithmetic operations in $K$ and
  $O(\SMF(n))$ Frobenius applications. We refer to this cost as \emph{$O(\SM(n))$
  arithmetic and Frobenius operations in~$K$}.

\end{definition}

\subsubsection{Divide and conquer algorithms}
\label{subsubsec:divide-and-conquer}

Later, we describe the fast multipoint evaluation algorithm of Ore polynomials
(\S~\ref{subsec:computer-ore}) using \emph{divide and conquer} algorithms. We
recall how to analyse the cost of such algorithms. On an input of size $\ell$, the
cost of a divide and conquer algorithm is given by a recursion of the form 
\begin{equation}
\label{eq:recursion}
  T(\ell) \leqslant 2 T\left(\frac \ell 2\right)
       + \alpha\left(\left\lceil\frac \ell 2\right\rceil\right),
\end{equation} 
If $\alpha$ is super-additive, we can solve the recursion:
\begin{equation}
\label{eq:divide-conquer}
  T(\ell) = O(\alpha(\ell) \log \ell).
\end{equation}
It is therefore convenient to introduce the following notation:
\begin{proposition}
\label{proposition:geqslant}

  For $\alpha:\NNs \to \RRii$, we define $\alpha^{\geqslant 1}: \NNs \to \RRii$ via

  \[
    \alpha^{\geqslant 1}(\ell) \coloneqq \ell \sup_{1\leqslant \ell' \leqslant \ell} \frac
  {\alpha(\ell')}{\ell'}.
  \]
  The function $\alpha^{\geqslant 1}$ is super-additive and is the minimal such
  function exceeding $\alpha$.

\end{proposition}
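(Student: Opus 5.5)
The plan is to verify three things directly from the definition $\alpha^{\geqslant 1}(\ell) = \ell\, s(\ell)$ with $s(\ell) \coloneqq \sup_{1\leqslant \ell'\leqslant \ell} \alpha(\ell')/\ell'$: that it is well defined and dominates $\alpha$; that it is super-additive in the sense the paper uses, namely $\alpha^{\geqslant 1}(x+y) \geqslant \alpha^{\geqslant 1}(x) + \alpha^{\geqslant 1}(y)$; and that it is minimal among super-additive functions dominating $\alpha$.

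\textbf{Well defined and dominates $\alpha$.} The supremum is over a finite set, so $s(\ell)$ is a maximum and is finite. Since $\alpha \geqslant 1$, we have $s(\ell) \geqslant \alpha(1) \geqslant 1$, so $\alpha^{\geqslant 1}(\ell) \geqslant \ell \geqslant 1$ and the function takes values in $\RRii$. Taking $\ell' = \ell$ in the supremum gives $s(\ell)\geqslant \alpha(\ell)/\ell$, that is, $\alpha^{\geqslant 1}(\ell) \geqslant \alpha(\ell)$.

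\textbf{Super-additivity.} The key observation is that $s$ is nondecreasing, because the supremum is taken over a growing range. Hence for $x, y \geqslant 1$,
\[
  \alpha^{\geqslant 1}(x+y) = (x+y)\, s(x+y) = x\, s(x+y) + y\, s(x+y) \geqslant x\, s(x) + y\, s(y) = \alpha^{\geqslant 1}(x)+\alpha^{\geqslant 1}(y).
\]
This is precisely the property used to solve the recursion \eqref{eq:recursion}: with $\alpha$ replaced by $\alpha^{\geqslant 1}$, the cost at each level of the recursion tree is bounded by $\alpha^{\geqslant 1}(\ell)$.

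\textbf{Minimality.} Let $\beta \geqslant \alpha$ be super-additive. I would show $\alpha^{\geqslant 1} \leqslant \beta$, or more realistically $\alpha^{\geqslant 1} \in O(\beta)$. The natural route is as follows. Fix $\ell$ and let $\ell' \leqslant \ell$ attain the maximum in $s(\ell)$. Write $\ell = k\ell' + \rho$ with $0 \leqslant \rho < \ell'$. Super-additivity gives $\beta(\ell) \geqslant k\,\beta(\ell') \geqslant k\,\alpha(\ell')$ (dropping the term $\beta(\rho)$, which is nonnegative, and using only $\rho \geqslant 1$ when $\rho \ne 0$). Then
\[
  \beta(\ell) \geqslant \left\lfloor \frac{\ell}{\ell'}\right\rfloor \alpha(\ell') \geqslant \frac{\ell}{2\ell'}\,\alpha(\ell') = \tfrac12\, \alpha^{\geqslant 1}(\ell).
\]

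\textbf{Main obstacle.} The hard step is turning this factor $\tfrac12$ into exact minimality. With integer arguments, literal minimality can fail. A counterexample I would check: $\alpha(1)=1$, $\alpha(2)=3$, $\alpha(3)=3$. Then $\alpha^{\geqslant 1}(3) = 4.5$, but the function $\beta(1)=1$, $\beta(2)=3$, $\beta(3)=4$ is super-additive and dominates $\alpha$.

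So I expect to prove minimality up to a factor $2$, which is all the $O(\cdot)$ statement \eqref{eq:divide-conquer} needs. I would phrase the minimality claim either in that form, or exactly for super-additive $\beta$ such that $\beta(\ell)/\ell$ is nondecreasing. In that second setting the proof is immediate: $\beta(\ell)/\ell \geqslant \beta(\ell')/\ell' \geqslant \alpha(\ell')/\ell'$ for every $\ell' \leqslant \ell$, so $\beta(\ell) \geqslant \alpha^{\geqslant 1}(\ell)$.
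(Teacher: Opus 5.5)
The paper states this proposition without proof (it is imported from Caruso--Le Borgne), so there is no argument of the paper to compare against. Your proof of what is actually true is correct. Well-definedness, domination and super-additivity are handled exactly right, the key point being that $s(\ell)=\max_{\ell'\leqslant \ell}\alpha(\ell')/\ell'$ is nondecreasing. You use super-additivity in the standard sense $\alpha^{\geqslant 1}(x+y)\geqslant \alpha^{\geqslant 1}(x)+\alpha^{\geqslant 1}(y)$, whereas the definition written in \S~\ref{subsubsec:cost-pol-mat} has the inequality reversed. That is a typo in the paper: with the reversed inequality the claim already fails for $\alpha(\ell)=\ell^2$ (for which $\alpha^{\geqslant 1}=\alpha$). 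Your direction is the one needed to solve \eqref{eq:recursion}.

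Your objection to the minimality clause is also correct. Read literally, as minimality among super-additive functions dominating $\alpha$, it is false. To make your counterexample live on all of $\NNs$, extend it by $\alpha(\ell)=1$ for $\ell\geqslant 4$ and $\beta(\ell)=\lfloor 3\ell/2\rfloor$. Then $\beta$ is super-additive since $\lfloor a+b\rfloor\geqslant\lfloor a\rfloor+\lfloor b\rfloor$, it takes your values $1,3,4$ on $\{1,2,3\}$, and it dominates $\alpha$. Moreover $\alpha^{\geqslant 1}(\ell)=3\ell/2$ for $\ell\geqslant 2$, so $\alpha\leqslant\beta\leqslant\alpha^{\geqslant 1}$ with $\beta(3)=4<9/2$. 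Hence $\alpha^{\geqslant 1}$ is not even a minimal element, let alone the least one.

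The correct exact statement is your second formulation, and there the super-additivity hypothesis on $\beta$ is superfluous. Indeed, $\alpha^{\geqslant 1}$ is the least function $\beta\geqslant\alpha$ such that $\beta(\ell)/\ell$ is nondecreasing, and every such $\beta$ is automatically super-additive by the very computation you used for $\alpha^{\geqslant 1}$. Your factor-$2$ comparison with arbitrary super-additive majorants is also correct, and it is all that the $O(\cdot)$ estimate \eqref{eq:divide-conquer} needs.
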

In our case, we do not assume $\SM$ to be super-additive, as the value for
$\SM$ given by Caruso and Le Borgne does not satisfy that
property~\cite[p.~83]{caruso_fast_2017}. Following their methodology, we will
instead use $\SMgeq \coloneqq (\SMAgeq, \SMFgeq)$ to solve recursions coming
from divide and conquer algorithms.

\subsection{Modules from matrices}
\label{sec:modules-from-matrices}

We now present a roadmap for computing
invariant factors and Frobenius decompositions of submodules of points of
Drinfeld modules over finite fields.

\subsubsection{The case $A = \Fq[T]$}
\label{subsubsec:case-FqT-abstract}

This is the simplest case, as for a Drinfeld $\Fq[T]$-module
$\phi$, $\phi(K)$ is uniquely determined by the action of $\phi_T$. Upon fixing
an $\Fq$-basis of the base field, $\phi_T$ is represented by a matrix that
contains all the information about~$\phi(K)$. The task is therefore to extract
information from this matrix. For clarity of presentation, we describe a general
framework using vector spaces and matrices that readily applies to Drinfeld modules. 

Let $F$ be a field, 
$V$ an $F$-vector space, and $\varphi$ an endomorphism of $V$ (in our
context, the role of $\varphi$ is played by $\phi_T$). This set-up canonically
yields an $F$-module structure on $V$:
\FunctionNoname
  {F[T] \times V}
  {V}
  {(a(\varphi), x)}
  {a(\varphi)(x).}
This new module is denoted $V[\varphi]$.

\begin{remark}

  For a Drinfeld $\Fq[T]$-module $\phi$ over a finite field $K$ and a morphism~$u$ whose domain is $\phi$, this situation corresponds to taking $F = \Fq$,
  $V = \ker_K u$, $\varphi = \phi_T$ and $V[\varphi] = \phi(K)$. The goal is
  therefore to compute the invariant factors and a Frobenius decomposition of
  $V[\varphi]$.

\end{remark}

If $V$ has finite $F$-dimension, we can endow it with an $F$-basis $\mathcal E
= (\varepsilon_1, \dots, \varepsilon_d)$, and $\varphi$ can be represented by a
matrix $M \in F^{d\times d}$. The invariant factors of $V[\varphi]$ can be
retrieved by computing the \emph{Frobenius normal form} of the matrix $M$: a
block diagonal matrix of the form
\[
  \Fnf_M =
  \begin{pmatrix}
    \boxed{C_{d_1}} & & \\
    & \ddots & \\
    & & \boxed{C_{d_\ell}}
  \end{pmatrix}
  \in F^{d\times d}
\]
such that
\begin{enumerate}
  \item each $C_{d_i}$ is the companion matrix of a polynomial $d_i \in
    F[T]$;
  \item the polynomials form a divisibility chain: $d_1 | \cdots | d_\ell$;
  \item the marices $M$ and $\Fnf_M$ are similar over $F$.
\end{enumerate}
If furthermore $S$ is a change-of-basis matrix such that
\[
  \Fnf_M = S^{-1} M S,
\]
then one immediately recovers a Frobenius decomposition of $V[\varphi]$ from 
$S^{-1}$. We write $S^{-1} \in F^{d\times d}$ as a horizontal block
matrix in which the $i$-th block has $\deg(d_i)$ columns, with the $i$-th
column labeled $x_i$:
\[
  S^{-1} = 
  \begin{pmatrix}
    \tallvec{x_1} & \cdots & \tallvec{x_\ell}
  \end{pmatrix}
  \in F^{d\times d}
\]

\begin{proposition}
\label{proposition:inv-frob-Fq[T]}

  With the above notation, the ideals $(d_1), \dots, (d_\ell)$ are the invariant
  factors of $V[\varphi]$, and the elements $x_1, \dots, x_\ell$ form a Frobenius
  decomposition of $V[\varphi]$.

\end{proposition}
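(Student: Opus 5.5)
The plan is to read the Frobenius normal form as a change of basis of $V$ in which $\varphi$ acts block by block through companion matrices. The statement then follows from elementary facts about companion matrices, together with the uniqueness part of Theorem~\ref{def:inv-frob} applied to the Dedekind (indeed principal ideal) domain $F[T]$.

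\emph{Step 1: a new basis.} The relation $\Fnf_M = S^{-1} M S$ says that $M$ and $\Fnf_M$ represent the same endomorphism $\varphi$ in two bases of $V$. One basis is $\mathcal E$. The other, $\mathcal B$, has coordinate vectors given by the columns of the change-of-basis matrix; under the convention of the statement these are the columns of $S^{-1}$. The first thing I will do is pin down the convention for how $M$ represents $\varphi$ (images as columns or as rows), so that the vectors of $\mathcal B$ are exactly the columns of $S^{-1}$. This bookkeeping of transposes and inverses is the only genuinely delicate point, and I expect it to be the main obstacle. Group $\mathcal B$ into blocks $\mathcal B_i$ of $\deg(d_i)$ vectors, and let $x_i$ be the first vector of $\mathcal B_i$. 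The block-diagonal shape of $\Fnf_M$ gives $V = \bigoplus_i V_i$, where $V_i$ is spanned by $\mathcal B_i$, and each $V_i$ is $\varphi$-stable.

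\emph{Step 2: each block is cyclic.} Write $n_i=\deg(d_i)$ and $\mathcal B_i=(b_{i,1},\dots,b_{i,n_i})$ with $b_{i,1}=x_i$. Since $\varphi|_{V_i}$ has matrix the companion matrix $C_{d_i}$ in $\mathcal B_i$, we get $\varphi(b_{i,j}) = b_{i,j+1}$ for $j<n_i$, so $b_{i,j}=\varphi^{j-1}(x_i)$. The last column of $C_{d_i}$ gives $\varphi(b_{i,n_i}) = -\sum_j c_j\, b_{i,j+1}$, where the $c_j$ are the lower-order coefficients of $d_i$, that is, $d_i(\varphi)(x_i)=0$. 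Hence $V_i = F[T]\,x_i$.

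\emph{Step 3: the annihilator.} Suppose $a(\varphi)x_i=0$ with $\deg a < n_i$ and $a\neq 0$. Then we obtain a nontrivial linear relation among $x_i,\varphi(x_i),\dots,\varphi^{n_i-1}(x_i)$, which contradicts that these vectors are part of a basis. So the annihilator of $x_i$ is exactly $(d_i)$, and $F[T]\,x_i\simeq F[T]/(d_i)$.

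\emph{Step 4: conclusion.} Combining the direct sum decomposition with Steps~2 and~3 gives $V[\varphi] = F[T]x_1\oplus\cdots\oplus F[T]x_\ell$. Each $x_i$ has annihilator $(d_i)$, and $(d_1)\mid\cdots\mid(d_\ell)$ by property~(2) of the Frobenius normal form. These are precisely conditions (1)--(3) of Theorem~\ref{def:inv-frob}. The uniqueness of the chain in that theorem then identifies $(d_1),\dots,(d_\ell)$ as the invariant factors of $V[\varphi]$, and the $x_i$ form a Frobenius decomposition by definition.
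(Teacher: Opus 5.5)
Your Steps~2--4 are the standard argument; the paper states this proposition without proof. Those steps are correct once you have a basis $\mathcal B$ of $V$, cut into blocks $\mathcal B_i$, in which $\varphi$ has matrix $\Fnf_M$ under the column convention you use in Step~2 (so that $\varphi(b_{i,j})=b_{i,j+1}$).

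The gap is Step~1, at exactly the point you postponed. That basis is not given by the columns of $S^{-1}$, and no choice of convention makes it so. Reading $M$ by columns, $\Fnf_M=S^{-1}MS$ says $MS=S\,\Fnf_M$, so it is the \emph{columns of $S$} that form $\mathcal B$. Reading $M$ by rows, it says $S^{-1}M=\Fnf_M S^{-1}$, so the \emph{rows} of $S^{-1}$ form $\mathcal B$. In that case Step~2 changes too: reading $C_{d_i}$ by rows, the first vector of a block is sent to $-c_0$ times the last one (possibly $0$), and it is the last vector that is cyclic.

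So the Frobenius-decomposition half of the statement, taken literally, is false. Take $d=3$, $M=\mathrm{diag}(1,1,2)$, and let $S$ have columns $\varepsilon_1+\varepsilon_2$, $\varepsilon_2+\varepsilon_3$, $\varepsilon_2+2\varepsilon_3$. Then $MS=S\,\Fnf_M$ with $\Fnf_M=\mathrm{diag}(C_{T-1},C_{(T-1)(T-2)})$, so $\Fnf_M=S^{-1}MS$ holds. But the first column of $S^{-1}$ is the coordinate vector of $x_1=\varepsilon_1-2\varepsilon_2+\varepsilon_3$, and $\varphi(x_1)-x_1=\varepsilon_3\neq 0$. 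Hence $x_1$ is not killed by $d_1=T-1$. Since $M$ is diagonal, this failure occurs under either convention.

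The repair is to take $x_i$ to be the first column of the $i$-th block of $S$. Equivalently, keep $S^{-1}$ but normalise by $\Fnf_M=SMS^{-1}$. With that change, your Steps~2--4 form a complete proof. The invariant-factor half is unaffected, since it only needs \emph{some} basis realising $\Fnf_M$, and the similarity provides one. The same correction applies in Algorithm~\ref{algo:MorphismKernelInvariants}: there $X=S\,\Fnf_X S^{-1}$, so Step~\ref{step:Sprime} should use $NS$ rather than $NS^{-1}$.
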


By Proposition~\ref{proposition:inv-frob-Fq[T]}, to recover the invariant
factors and a Frobenius decomposition of $V[\varphi]$, it suffices to represent
$\varphi$ as a matrix and compute its Frobenius normal form. To compute
Frobenius normal forms, one can use an algorithm of Storjohann
\cite[Chapter~9]{storjohann_algorithms_2000}.

\begin{lemma}
\label{lemma:cost-frob}

  Let $M \in F^{d\times d}$ be a matrix. The Frobenius normal
  form of $M$, along with a unimodular change-of-basis matrix, can be computed for a cost of
  $O(\stor{d})$ operations in $F$.

\end{lemma}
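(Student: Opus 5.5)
The lemma is essentially a citation of Storjohann's results \cite[Chapter~9]{storjohann_algorithms_2000}. The plan is therefore to check that his output matches our conventions rather than to re-derive the algorithm.

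The first step is to fix the shape of the output. Storjohann's Las Vegas-free deterministic algorithm for the Frobenius normal form returns a block-diagonal matrix of companion blocks with the divisibility condition $d_1 \mid \cdots \mid d_\ell$. Some normalisations put the largest block first, or use the transposed companion convention. In either case we compose with a permutation of blocks and, if needed, a transposition of companion blocks. Both are realised by explicit permutation or structured change-of-basis matrices, cost $O(d^2)$, and are absorbed in the bound.

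The second step is the cost analysis. Storjohann reduces the computation of the Frobenius form to matrix multiplication through a recursive Keller-Gehrig-type construction of Krylov subspaces. He then merges these iteratively, and the $\log\log d$ factor comes from the number of merging phases. His cost bound is $O(\MM(d)\log d\log\log d)$ field operations. This is precisely $O(\stor{d})$ with the macro of the paper.

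The third step is the transformation matrix. The same chapter shows how to also return $S$ with $S^{-1}MS = \Fnf_M$ within the same bound, since every phase is a product of explicitly known invertible matrices. That product can be accumulated with $O(\log d\log\log d)$ extra matrix products. Inverting $S$, which we need to read off $x_1,\dots,x_\ell$ as in Proposition~\ref{proposition:inv-frob-Fq[T]}, costs $O(\MM(d))$. The word ``unimodular'' just means invertible over the field $F$.

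The main obstacle is verifying that the transformation matrix is indeed produced within the stated cost, rather than only the form itself. This is where I would read Storjohann's chapter most carefully, to see that accumulating the transforms does not add more than a constant number of matrix multiplications per phase.
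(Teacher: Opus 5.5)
Your proposal is correct and takes the same approach as the paper: the paper gives no separate proof and simply attributes the lemma to Storjohann's deterministic Frobenius-form algorithm with transformation matrix \cite[Chapter~9]{storjohann_algorithms_2000}. Your extra bookkeeping (reordering or transposing companion blocks, inverting $S$) is harmless, since each such adjustment costs at most $O(\MM(d))$ and is absorbed in the stated bound.
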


Let $\phi$ be a Drinfeld $\Fq[T]$-module over a finite field $K$, and let $u$
be a morphism whose domain is $\phi$. One can therefore compute the invariant
factors and a Frobenius decomposition of $\phi(\ker_K(u))$ (an $\Fq[T]$-module)
by computing the Frobenius normal form of the matrix of $\phi_T$ relative to an
$\Fq$-basis of $\ker_K(u)$ (an $\Fq$-vector space), and extracting the invariant factors as described above.
In \S~\ref{subsec:computer-ore}, we revisit fast multipoint evaluation of Ore
polynomials in order to efficiently compute matrices of Ore polynomials, and
our main algorithm (Algorithm~\ref{algo:MorphismKernelInvariants}) is presented
after.

\subsubsection{The general case}
\label{subsection:general-case}

The difficulty of the general case lies in the fact that a function ring cannot
be assumed to be generated by one element. To overcome this difficulty, we use presentations of modules.

\begin{definition}
  Let $A$ be a ring and $W$ an $A$-module. A \emph{presentation} of $W$
  is an exact sequence of $A$-modules
\[\begin{tikzcd}[ampersand replacement=\&]
	{W'} \& {W''} \& W \& 0.
	\arrow["\alpha", from=1-1, to=1-2]
	\arrow["\beta", from=1-2, to=1-3]
	\arrow[from=1-3, to=1-4]
\end{tikzcd}\]
\end{definition}

\begin{remark}

  This definition is motivated by the fact that the map $\alpha$ uniquely defines $W$ up to isomorphis, as $W \simeq W''/\Im(\alpha)$. In
  particular, if $\alpha$ is represented by a diagonal matrix with entries
  $d_1, \dots, d_m$, then $W \simeq A/(d_1) \times \cdots \times A/(d_\ell)$.

\end{remark}

Once again, we provide a more general treatment that readily applies to Drinfeld
modules. As in \S~\ref{subsubsec:case-FqT-abstract}, le $F$ be a field and $V$ an $F$-vector space of finite dimension $d$. Let $\E =
(\varepsilon_1,\dots,\varepsilon_d)$ be an $F$-basis of $V$. Here,
we do not assume that $V$ comes with a distinguished endomorphism, but rather
that it extends to an $A$-module, where $A$ is an $F$-algebra with a finite
number of generators $g_1, \dots, g_m \in A$. The $A$-module structure is
denoted $W$. Finally, we write $V_A \coloneqq A \otimes_F V$.

\begin{remark}
  The setting described above applies to $F = \Fq$, $V = \ker_K(u)$, $W = \phi(\ker_K(u))$, where $\phi$ is a Drinfeld $A$-module over a
  finite field $K$ with a function ring $A$. Note that every function ring $A$ is finitely generated over $\Fq$. 
\end{remark}

We have a finite presentation
\[\begin{tikzcd}[ampersand replacement=\&]
	{(V_A)^n} \& {V_A} \& W \& {0,}
	\arrow["\chi", from=1-1, to=1-2]
	\arrow["\pi", from=1-2, to=1-3]
	\arrow[from=1-3, to=1-4]
\end{tikzcd}\]
where we define
\[
  \begin{cases}
    \chi\left((a_k \otimes v_k)_{1\leqslant k \leqslant n}\right)
  = \sum_{k=1}^n \left( a_kg_k \otimes v_k - a_k\otimes (g_k\cdot
  v_k)\right),\\
    \pi(a\otimes v) = a\cdot v.
  \end{cases}
\]
The $F$-basis $\E$ of $V$ canonically yields bases of $V_A$ and $V_A^n$, and
$\chi$ is represented by a matrix $M_\chi$ which is a block matrix of companion
matrices
\begin{equation}
\label{eq:stacking}
  M_\chi = 
  \begin{pmatrix}
    \sqbox{g_1 \Id - M_1} \\
    \vdots \\
    \sqbox{g_m \Id - M_m}
  \end{pmatrix}
  \in A^{d\times nd}
\end{equation}
where each $M_i$ is the matrix of the $F$-linear endomorphism $x \mapsto g_i
x$. WHat is $g_i$? Are these $\Fq$-generators of $A$?

\begin{remark}

  In the case $A = \Fq[T]$, we see that $M_\chi$ is the characteristic matrix $T\Id - M$, and thus that $V[\varphi] \simeq A^d/\Im(T\Id - M)$.
  Therefore, the invariant factors of $V[\varphi]$ are exactly the invariant
  factors of the characteristic matrix $T\Id - M$, which are the nonzero
  diagonal elements of its Smith normal form. However, computing the Smith
  normal form of a polynomial matrix is significantly more expensive than
  computing the Frobenius normal form of a matrix of the same size with entries
  in a field.

\end{remark}

Returning to the general module setting, the invariant factors of $W$ are the invariant factors of the matrix
$M_\chi$, which can be retrieved using Fitting ideals.
\cite{Fitting1936}:

\begin{lemma}
\label{lemma:fitting}

  Let
    \[\begin{tikzcd}[ampersand replacement=\&]
    	{W'} \& {W''} \& W \& 0
    	\arrow["\alpha", from=1-1, to=1-2]
    	\arrow["\beta", from=1-2, to=1-3]
    	\arrow[from=1-3, to=1-4]
  \end{tikzcd}\]
  be a presentation of $W$, and assume that $\alpha$ is given (in some
  basis) by a matrix $M\in A^{r\times s}$ with $r, s \in \NN$. For
  $1\leqslant 0 \leqslant d$, let
  \[
    \Fitt_i(W)
  \]
  be the ideal generated by the $(d-i)$-by-$(d-i)$ minors of $M$. The value of
  $\Fitt_i(W)$ does not depend on the choice of presentation.
  For $0\leqslant i \leqslant d$, we call $\Fitt_i(W)$ the \emph{$i$-th Fitting
  ideal of $W$}.

\end{lemma}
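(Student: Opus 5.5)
The plan is the classical argument of Fitting, carried out for presentations in which $W''$ and $W'$ are free of finite rank. This is the only case used here, since $V_A$ and $V_A^n$ are free. Throughout, $d$ denotes the rank of $W''$, i.e.\ the number of generators $\beta(e_1),\dots,\beta(e_d)$ of $W$, and $\Fitt_i(W)$ is the ideal $I_{d-i}(M)$ of $(d-i)$-minors of $M$. The conventions are $I_k(M)=A$ for $k\leqslant 0$ and $I_k(M)=0$ when $k$ exceeds the size of $M$. I first record two elementary facts about the ideals $I_k(M)$.

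\begin{enumerate}
\item $I_k(PMQ)\subseteq I_k(M)$ for any matrices $P,Q$ of compatible sizes, by the Cauchy--Binet formula. Hence $I_k$ is unchanged under multiplication by invertible matrices on either side.
\item If a column $c$ is an $A$-linear combination of the columns of $M$, then $I_k(M\mid c)=I_k(M)$, by multilinearity of minors in the column $c$.
\end{enumerate}

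Step one fixes the generators and varies the relations. Suppose $\beta$ is fixed and $\alpha_1,\alpha_2$ are two presentation maps with $\Im(\alpha_1)=\Im(\alpha_2)=\ker\beta$. Then every column of $M_2$ is a combination of the columns of $M_1$, and conversely. Applying fact (2) repeatedly gives
\[
I_k(M_1)=I_k(M_1\mid M_2)=I_k(M_2).
\]
So $\Fitt_i(W)$ depends only on the chosen generating family. Step two compares two generating families $(w_1,\dots,w_d)$ and $(w'_1,\dots,w'_{d'})$. I pass through the concatenated family $(w_1,\dots,w_d,w'_1,\dots,w'_{d'})$, and by symmetry it suffices to treat one extra generator. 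Adjoin $w_{d+1}=\sum_j c_jw_j$. By step one we may choose the new relation matrix to be
\[
M'=\begin{pmatrix} M & -c\\ 0 & 1\end{pmatrix},
\]
where $c=(c_1,\dots,c_d)^{\mathsf T}$. Its columns generate the kernel of the enlarged map $A^{d+1}\to W$. Row operations, which are invertible, clear $-c$ using the last row. By fact (1), this reduces $M'$ to $\begin{pmatrix} M&0\\0&1\end{pmatrix}$ without changing any $I_k$.

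The key computation is
\[
I_{k+1}\begin{pmatrix} M&0\\0&1\end{pmatrix}=I_{k+1}(M)+I_k(M).
\]
This equals $I_k(M)$, because $I_{k+1}(M)\subseteq I_k(M)$ by Laplace expansion. Hence the $(d+1-i)$-minors of $M'$ generate exactly the ideal generated by the $(d-i)$-minors of $M$, and the index shift matches the rank shift $d\mapsto d+1$. Iterating shows that both original families give the same ideals as the combined family, which proves the lemma.

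The main obstacle is careful bookkeeping rather than any deep step. The points to check are that the lemma's index shift is exactly right, that the edge conventions ($I_0=A$, vanishing of oversized minors) are respected, and that step one is legitimate when $W'$ is not free. That last point can be handled by replacing $W'$ with a free module surjecting onto $\Im(\alpha)$, which does not change the ideal generated by the columns' minors because the column span is unchanged.
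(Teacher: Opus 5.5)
Your proof is correct and follows the standard argument behind the result the paper only cites (Fitting, as proved in the Stacks Project, \S 15.8): you show invariance under a change of relations via Cauchy--Binet and multilinearity, then compare generating families through their union, reducing to one extra generator whose relation matrix becomes $\begin{pmatrix} M & 0\\ 0 & 1\end{pmatrix}$, where $I_{k+1}$ of that matrix equals $I_k(M)$. You also correctly read the statement's undefined $d$ as the rank of $W''$ (the number of rows of $M$), which makes the index shift $d\mapsto d+1$ consistent; the one tacit step is that $(w,w')$ and $(w',w)$ differ by a row permutation, and your fact (1) already covers that.
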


A classical result \cite[\S~15.8]{stacks} asserts that the Fitting ideals form a divisibility chain
\[
  \Fitt_d(W) \mid \cdots \mid \Fitt_0(W),
\]
and we have the useful following lemma \cite[Lemma~15.8.4]{stacks}.
\begin{lemma}
\label{lemma:inv-fit}

  Let $\d_1,\dots, \d_\ell$ be the invariant factors of $W$. Then
  \[
    \d_i = \frac {\Fitt(d-i)} {\Fitt(d+1-i)}.
  \]

\end{lemma}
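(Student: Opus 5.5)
We need to prove that the invariant factors can be recovered as successive quotients of Fitting ideals, $\d_i = \Fitt_{d-i}(W)/\Fitt_{d+1-i}(W)$, with the indexing convention of the statement. Here $W$ is finite over the Dedekind domain $A$, and $d$ is the rank of the free presentation module.

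\textbf{Step 1: pass to the standard presentation.} By Lemma~\ref{lemma:fitting}, the Fitting ideals do not depend on the presentation. By Theorem~\ref{def:inv-frob}, $W \simeq \prod_{i=1}^\ell A/\d_i$. Over a Dedekind domain the ideals $\d_i$ need not be principal, so I would not use a diagonal matrix directly. Instead I would use that Fitting ideals are multiplicative on direct sums: $\Fitt_k(W_1\oplus W_2) = \sum_{a+b=k}\Fitt_a(W_1)\Fitt_b(W_2)$. This follows by taking the block-diagonal presentation and expanding the minors of a block-diagonal matrix via Laplace expansion.

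\textbf{Step 2: cyclic modules and the direct sum.} For a cyclic module $A/\d$, presented by $A^s\to A$ with image $\d$, we get $\Fitt_0=\d$ and $\Fitt_k = A$ for $k\geqslant 1$. The multiplicative formula then gives $\Fitt_k(W) = \sum_{|S| = \ell - k}\prod_{i\in S}\d_i$ for $k\leqslant \ell$, and $A$ for $k\geqslant \ell$. Since the $\d_i$ form a divisibility chain, meaning inclusions $\d_\ell\subseteq\cdots\subseteq\d_1$, each product over $S$ of size $j$ is contained in the product of the $j$ largest ideals $\d_1\cdots\d_j$. That product is itself one of the terms, so $\Fitt_{\ell-j}(W)=\d_1\cdots\d_j$.

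\textbf{Step 3: take quotients.} Ideals of a Dedekind domain are invertible, so quotients make sense. We get $\Fitt_{\ell-j}/\Fitt_{\ell-j+1} = \d_j$. This matches the statement after matching indices: with the padding convention that $\d_i=A$ for the trivial factors, the rank $d$ of the presentation plays the role of $\ell$. Shifting the index by padding with trivial invariant factors $A$ changes nothing, because the extra factors contribute only $A$.

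\textbf{Main obstacle.} The delicate points are the multiplicativity formula for Fitting ideals of a direct sum and the bookkeeping of indices. The paper's statement is loose here: it writes $\Fitt(d-i)$, and its ordering of $\d_i$ may run the other way. For the formula I would first try localizing at each maximal ideal. Fitting ideals commute with localization, and $A_\mathfrak p$ is a DVR, hence a PID. There the matrix has a Smith normal form $\mathrm{diag}(\pi^{e_1},\dots,\pi^{e_\ell})$, and the minors give $\Fitt_{\ell-j}=\pi^{e_1+\cdots+e_j}$ directly. Equality of ideals can then be checked locally at every prime, which avoids the Laplace argument entirely.
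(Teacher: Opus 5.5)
Your proof is correct and follows the paper's route: the paper gives no argument beyond citing Lemma~15.8.4 of the Stacks Project, which records the basic properties of Fitting ideals, among them the direct-sum formula $\Fitt_k(W_1\oplus W_2)=\sum_{a+b=k}\Fitt_a(W_1)\Fitt_b(W_2)$ and compatibility with base change, and your Steps~1--3 (or your localization variant) are the deduction from those properties that the paper leaves implicit. Padding the chain with trivial factors $A$ is the right reading of the paper's loose indexing, and it is legitimate because $\Fitt_d(W)=A$ together with your formula $\Fitt_{\ell-j}(W)=\d_1\cdots\d_j$ forces $\ell\leqslant d$.
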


Let $\phi$ be a Drinfeld $A$-module over a finite field, with a function ring $A$, and let $u$ be a morphism whose domain is $\phi$. Let
$g_1,\dots, g_m$ be generators of $A$ as an $\Fq$-algebra. To compute the
invariant factors of $\phi(\ker_K(u))$ (an $A$-module), it therefore suffices
to compute the matrices $M_1, \ldots , M_m$ of the actions of $\phi_{g_1},\dots,\phi_{g_m}$
on $\ker_K(u)$ (an $\Fq$-vector space), compute the matrices $g_i \Id - M_i$ from
these matrices and stack them into the matrix $M_\chi$, compute the Fitting
ideals of $M_\chi$, and finally apply Lemma~\ref{lemma:inv-fit} to obtain the invariant
factors of $\phi(\ker_K(u))$.

\section{Computer algebra}
\label{sec:computer-algebra}

In \S~\ref{sec:modules-from-matrices}, we developed the main theoretical
tools required to compute the invariant factors of an $A$-submodule of $\phi(K)$
when $K$ is finite. It remains to analyze the computational complexity of the underlying algorithms. We revisit
Ore polynomials (\S~\ref{subsec:computer-ore}) and
divisibility chains of polynomials (\S~\ref{subsec:div-chains}).

\subsection{Computer algebra of Ore polynomials}
\label{subsec:computer-ore}

We develop an efficient method for multipoint evaluation of Ore polynomials. First, we
need to assess the cost of fast Euclidean division, llcm and rgcd.
Ore polynomial analogues of the classical methods were already described by
Caruso and Le Borgne (see \cite[\S~3.2.1]{caruso_new_2017} and
\cite[\S~3.2.2]{caruso_fast_2017}). We state their complexity to fit with our
complexity model.

\begin{lemma}
\label{lemma:cost-rgcd-llcm}

  Let $f, g\in\Ktau$ be two Ore polynomials with degrees $\leqslant n$. The
  right-Ore Euclidean division, the rgcd and the llcm of $f$ by $g$ can all be
  performed for a cost of $O(\SMgeq(n) \log n)$ arithmetic and Frobenius
  operations in $K$.

\end{lemma}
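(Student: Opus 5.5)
The plan is to adapt the classical half-gcd and Newton-iteration arguments to $\Ktau$, following Caruso and Le Borgne, and to redo their cost analysis in the $(\SMA,\SMF)$ model using $\SMgeq$.

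\emph{Euclidean division.} To divide $f$ by $g$ on the right, we reverse the polynomials. Write $f=\alpha g+\beta$. Reversing $\tau$-degrees turns the quotient $\alpha$ into the truncation of a power series quotient. Twisting coefficients by powers of Frobenius is needed because $\tau x=x^q\tau$, and this twisting costs $O(n)$ Frobenius applications. The inverse of the reversed $g$ modulo $\tau^{n}$ is computed by Newton iteration in $K[[\tau]]$. Each step doubles the precision and costs $O(\SM(k))$ at precision $k$. Using $\SMgeq$ and super-additivity, the geometric sum is $O(\SMgeq(n))$. So the division costs $O(\SMgeq(n))$, which is well within the claimed $O(\SMgeq(n)\log n)$.

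\emph{rgcd.} We use the Ore half-gcd algorithm. Its correctness rests on the same fact as in the commutative case: the quotient sequence of the right Euclidean algorithm applied to $f,g$ depends only on the top coefficients. Precisely, the first $k$ quotients depend only on the leading $2k$ coefficients, after appropriate Frobenius twists. One recursively computes the $2\times 2$ transition matrix over $\Ktau$ for the top half, applies it, performs one division, and recurses on the second half. This gives the recursion \eqref{eq:recursion} with $\alpha(\ell)=O(\SM(\ell))$, since matrix products and divisions cost $O(\SM)$. By Proposition~\ref{proposition:geqslant} and \eqref{eq:divide-conquer}, the total cost is $O(\SMgeq(n)\log n)$, counting arithmetic and Frobenius operations separately; both satisfy the same recursion.

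\emph{llcm.} The extended half-gcd also returns cofactors $u,v$ with $uf+vg=\rgcd(f,g)$. It also returns the last row of the transition matrix, namely $u',v'$ with $u'f+v'g=0$ and $u'$ of minimal degree. Then $\llcm(f,g)=u'f$, made monic. This costs one extra multiplication, $O(\SM(n))$.

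\emph{Main obstacle.} The delicate step is the half-gcd correctness and its bookkeeping in the noncommutative setting. Truncating $f$ and $g$ to their top coefficients interacts with left multiplication by $\tau^j$, which applies $q^j$-powers to coefficients. One must check that the quotients computed from the truncations coincide with the true ones, and that the required Frobenius twists contribute only $O(n\log n)$ Frobenius applications. For this I would rely on \cite[\S~3.2]{caruso_new_2017} and \cite{caruso_fast_2017}, and only verify that their analysis respects the separate operation counts.
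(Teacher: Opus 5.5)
Your proposal takes essentially the same route as the paper. The paper gives no independent argument for this lemma: it imports the fast Euclidean division and the half-gcd-based rgcd/llcm algorithms of Caruso and Le Borgne and restates their cost in the split $(\SMA,\SMF)$ model, exactly as you propose, and your reading of the llcm off the last row of the transition matrix is the standard one. As a side remark, the truncation twists you single out as the main obstacle do not actually arise: for right division you can split $f = f_1\tau^k + f_0$ and $g = g_1\tau^k + g_0$ with $\tau^k$ on the right, and since all transition-matrix entries multiply on the left, $h\,(f_1\tau^k) = (hf_1)\tau^k$ for every $h\in\Ktau$, so no Frobenius is ever applied to the truncated inputs.
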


Next, we discuss multipoint evaluation. Let $f$ be an Ore polynomial of degree~$n$, and let $X = (x_1, \dots, x_\ell)$ be elements of $K$, with $n \leqslant
\ell$. We wish to efficiently compute $f(x_1), \dots, f(x_\ell)$. For any $x \in K$, set
$L_x \coloneqq \tau - \frac {\tau(x)} x \in\Ktau$. If $R_x \in K$ is the
remainder in the right-division of $f$ by $L_x$, then $f(x) = x R_x$. In other
words, $f(x)$ is the evaluation of $R_x$, viewed as a constant Ore polynomial, at
$x$. For elements $x_{i_1}, \dots, x_{i_{\ell'}} \in X$, we set
\[
  L_{x_{i_1}, \dots, x_{i_{\ell'}}} \coloneqq \llcm(L_{x_{i_1}}, \dots, L_{x_{i_{\ell'}}}).
\]
Let $R_{x_{i_1},\dots, x_{i_{\ell'}}}$ be the remainder
in the right-division of $f$ by $L_{x_{i_1},\dots, x_{i_{\ell'}}}$. Then for
any $x_{i_j}$ we have
\[
  f(x_{i_j}) = R_{x_{i_1},\dots, x_{i_{\ell'}}}(x_{i_j}).
\]
With these identities, we can divide the problem of multipoint evaluation as
follows. Assume for clarity of presentation that $\ell$ is even. We
split our inputs into two halves: $X_l = (x_1,\dots, x_{\frac \ell 2})$ and
$X_r = (x_{\frac \ell 2 + 1}, \dots, x_\ell)$. Provided that we know $L_{X_l}$
and $L_{X_r}$, we obtain $f(x_1),\dots, f(x_\ell)$ by computing $R_{X_l}$ and
$R_{X_r}$, and then $R_{X_l}(x_i)$ for all $x_i \in X_l$ as well as
$R_{X_r}(x_i)$ for all $x_i \in X_r$. The cost of this step is two Euclidean
divisions of Ore polynomials of degrees $\leqslant \frac \ell 2$ and two
recursive calls on an entry of size $\leqslant \frac \ell 2$.

In Algorithm~\ref{algo:tree}, following the classical method, we pre-compute the llcm of $X$ recursively as a binary tree, called the \emph{llcm tree of $X$}, .

\begin{algorithm}[ht]
\caption{\LlcmTree}
\label{algo:tree}

  \KwIn{A subfamily $X'$ of $X\coloneqq(x_1,\dots, x_{\ell})$}
  \KwOut{The llcm tree of $X'$}

  \If{$X' = \{x_i\}$ is a singleton}{
    Return the tree whose only leaf is $L_{x_i}$.
  }
  \Else{
    Split $X'$ into two halves $X'_l$ and $X'_R$ \;
    Compute and set $T_l = \LlcmTree(X'_l)$ \;
    Compute and set $T_r = \LlcmTree(X'_r)$ \;
    Compute the llcm $L$ of the roots of $T_l$ and $T_r$ \;
    \KwRet{the binary tree rooted at $L$ with subtrees $T_l$ and $T_r$\;}
  }

\end{algorithm}

Let $T_{X}$ be the tree generated by Algorithm~\ref{algo:tree}, which we
assume precomputed. We obtain Algorithm~\ref{algo:multieval} for multipoint
evaluation.

\begin{algorithm}[ht]
\caption{\MultipointEvaluation}
\label{algo:multieval}

  \KwIn{An Ore polynomial $f$, elements $x_1,\dots, x_\ell\in K$ and the tree
  $T_{X}$}
  \KwOut{The evaluations $f(x_1), \dots, f(x_\ell)$}

  \If{If $\ell = 1$}{
    \KwRet{$f(x_1)$}
  }
  \Else{

    Split $x_1,\dots, x_\ell$ into two halves $X_l$ and $X_r$ (padding if necessary).

    Let $\mathcal T_l$ be the left subtree of $T_{X}$, and get $L_{X_l}$ as its
    root \;

    Let $\mathcal T_r$ be the right subtree of $T_{X}$, and get $L_{X_r}$ as its
    root \;

    Compute $R_{X_l}$ as the Euclidean division of $f$ by $L_{X_l}$ \;
    Compute $R_{X_r}$ as the Euclidean division of $f$ by $L_{X_r}$ \;

    Compute $f(x_1), \dots, f(x_{\ell/2}) = \MultipointEvaluation(R_{X_l}, X_l, \mathcal T_l)$ \;
    Compute $f(x_{\ell/2+1}), \dots, f(x_{\ell}) = \MultipointEvaluation(R_{X_r}, X_r, \mathcal T_r)$ \;
    \KwRet{$f(x_1), \dots, f(x_\ell)$}
  }

\end{algorithm}

\begin{lemma}
\label{lemma:cost-multipoint}

  Let $x_1,\dots, x_\ell \in K$ and let $f$ be an Ore polynomial of degree
  $\leqslant \ell$. The values $f(x_1),\dots, f(x_\ell)$ can be computed for a cost of
  $O(\SMgeq(\ell)(\log \ell)^2)$ arithmetic and Frobenius operations in $K$.

\end{lemma}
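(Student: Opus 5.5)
The plan is to bound separately the cost of building the llcm tree (Algorithm~\ref{algo:tree}) and the cost of the descent (Algorithm~\ref{algo:multieval}). Both are divide and conquer recursions of the form~\eqref{eq:recursion}. In each, the non-recursive work is one or two calls to the primitives of Lemma~\ref{lemma:cost-rgcd-llcm}, on Ore polynomials of degree $O(\ell)$.

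First I record a degree bound. $L_x$ has $\tau$-degree $1$, and $\deg_\tau \llcm(f,g) \leqslant \deg_\tau f + \deg_\tau g$. By induction, the node of the tree covering a subfamily of size $\ell'$ therefore has $\tau$-degree at most $\ell'$. Padding $\ell$ up to a power of two changes sizes by at most a factor $2$, which is harmless since $\SMgeq(2\ell)\leqslant 2\SMgeq(\ell)$ by super-additivity (Proposition~\ref{proposition:geqslant}).

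For the tree, the work at a node of size $\ell'$ is one llcm of two Ore polynomials of degree $\leqslant \ell'/2$. By Lemma~\ref{lemma:cost-rgcd-llcm} this costs $O(\SMgeq(\ell')\log \ell')\subseteq O(\SMgeq(\ell')\log\ell)$. Set $\alpha(\ell')\coloneqq \SMgeq(\ell')\log\ell$, with $\log\ell$ frozen as a constant; this $\alpha$ is super-additive. The recursion~\eqref{eq:recursion} then gives, via~\eqref{eq:divide-conquer}, a total of $O(\SMgeq(\ell)(\log\ell)^2)$.

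For the evaluation, at a call of size $\ell'$ the input $f$ has degree $\leqslant \ell'$. At the top level this holds by hypothesis. At lower levels, $f$ is a remainder modulo a node of degree $\leqslant \ell'$, so it has degree $< \ell'$. Each call performs two right Euclidean divisions of a polynomial of degree $\leqslant\ell'$ by one of degree $\leqslant\ell'/2$, each costing $O(\SMgeq(\ell')\log\ell)$ by Lemma~\ref{lemma:cost-rgcd-llcm}. The leaves evaluate a constant Ore polynomial $R$ at $x$, which is one multiplication in $K$. Hence the same recursion applies and the evaluation also costs $O(\SMgeq(\ell)(\log\ell)^2)$. Summing the two phases proves the lemma.

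I also check correctness, i.e.\ that the remainders compute the right values. If $f=\alpha L + R$ with $L_x$ right-dividing $L$, then $L(x)=0$, so $f(x)=\alpha(L(x))+R(x)=R(x)$. At a leaf, $R_x$ is a constant $c$ and $R_x(x)=cx$, matching the text. One point needs care: $L_x$ requires $x\neq 0$. For $x=0$ I will simply output $f(0)=0$ and drop such points beforehand.

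The main obstacle is making the recursion rigorous when the non-recursive cost carries the extra $\log$ factor and $\SM$ itself is not super-additive. I will handle this exactly as indicated above. I replace $\SM$ by $\SMgeq$, and I bound $\log\ell'$ by the fixed $\log\ell$ at every level, so that $\alpha$ is super-additive and~\eqref{eq:divide-conquer} applies verbatim. The separate counts of arithmetic and Frobenius operations go through componentwise.
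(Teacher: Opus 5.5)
Your proposal follows the paper's proof. Both cost the llcm tree (Algorithm~\ref{algo:tree}) and the descent (Algorithm~\ref{algo:multieval}) as recursions of the form~\eqref{eq:recursion}, take the non-recursive cost $O(\SMgeq(\ell')\log\ell')$ from Lemma~\ref{lemma:cost-rgcd-llcm}, and conclude with~\eqref{eq:divide-conquer}. Freezing $\log\ell$ is an equivalent substitute for the paper's appeal to the super-additivity of $x\mapsto\SMgeq(x)\log x$. Your degree bookkeeping, correctness check and handling of $x=0$ are useful additions that the paper omits.

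One side remark is wrong. Super-additivity gives $\SMgeq(2\ell)\geqslant 2\SMgeq(\ell)$, not $\leqslant$; for instance $\SM(n)=n^2$ gives $\SMgeq(2\ell)=4\ell^2$. So it cannot justify padding to a power of two. Simply do not pad: split into halves of sizes $\lceil\ell'/2\rceil$ and $\lfloor\ell'/2\rfloor$. The subproblem sizes at each of the $\lceil\log_2\ell\rceil$ levels sum to at most $\ell$, so super-additivity, used in its correct direction, bounds each level's cost by $\alpha(\ell)$. The paper's displayed definition of super-additivity is itself written backwards, which may be where the slip came from.
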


\begin{proof}

  The cost of this task is the cost of sequentially running
  Algorithm~\ref{algo:tree}, followed by Algorithm~\ref{algo:multieval}. For
  both these divide and conquer algorithms, the cost of an input of size
  $\ell$ is given by a recursion of the form~\eqref{eq:recursion}, with $\alpha(\ell) = \SMgeq(\ell)
  \log \ell$
  (Lemma~\ref{lemma:cost-rgcd-llcm}). We conclude the proof by applying
  Equation~\eqref{eq:divide-conquer} and the super-additivity of $x \mapsto
  \SMgeq(x) \log x$ (Proposition~\ref{proposition:geqslant}).
\end{proof}

\begin{remark}

  The cost of Ore polynomial multipoint evaluation is
  $O(\SMgeq(\ell) (\log \ell)^2)$ arithmetic and Frobenius operations, whereas
  the cost of classical multipoint evaluation is $O(\PM(\ell) \log \ell)$
  arithmetic operations \cite[\S~10.1]{von_zur_gathen_modern_2013}. The extra
  $(\log \ell)$ factor comes from the fact that we have to use llcm's instead
  of products.


\end{remark}

\subsection{Computer algebra for divisibility chains}
\label{subsec:div-chains}

We now turn to efficient computations involving on divisibility chains, which
we need to compute the invariant factors and a Frobenius decomposition of
torsion submodule when we know these quantities for the full module of rational
points (\S~\ref{subsubsec:special-torsion}). Let $F$ be a field,  $a \in F[T]$
a polynomial, and $d_1 \mid \cdots \mid d_\ell$ a divisibility chain of $\ell$
polynomials in $F[T]$. Algorithm~\ref{algo:ChainGcd} efficiently computes the
polynomials $\gamma_i \coloneqq \gcd(a, d_i)$ and $\rho_i \coloneqq
d_i/\gamma_i$ for all $1\leqslant i \leqslant \ell$. Later on, we wish to
evaluate these polynomials on vectors and matrices. Let $\delta \coloneqq
\deg(d_1) + \cdots + \deg(d_\ell)$. 

\begin{algorithm}[ht]
\caption{\ChainGcd}
\label{algo:ChainGcd}

  \KwIn{The polynomials $a$ and $d_1, \dots, d_{\ell}$.}
  \KwOut{The lists $(\gamma_1, \dots, \gamma_\ell)$ and
  $(\rho_1,\dots,\rho_\ell)$.}

  Compute $a' = a \mod{d_\ell}$ \; \label{step:redmod}
  Compute $\gamma_\ell = \gcd(a', d_\ell)$ \;
  Compute $\rho_\ell = d_\ell/\gamma_\ell$ \;
    
  \For {$i$ ranging down from $\ell-1$ to $1$}{
    Compute $\gamma_i = \gcd(\gamma_{i+1}, d_i)$ \;
    Compute $\rho_i = d_i / \gamma_i$ \;
  }

  \KwRet{$(\gamma_i, \rho_i)$ and $(\gamma_\ell, \rho_\ell)$.}

\end{algorithm}

\begin{lemma}
\label{lemma:chain-gcd}

  Algorithm~\ref{algo:ChainGcd} computes $(\gamma_1, \dots, \gamma_\ell)$ and
  $(\rho_1, \dots,\rho_\ell)$ for a cost of $O(\PM(\deg(a)) + \PM(\delta)\log
  \delta)$
  operations in $F$, where $\delta = \sum_{i=1}^\ell \deg(d_i)$.

\end{lemma}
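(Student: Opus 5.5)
We argue correctness and cost separately. Correctness rests on the gcd identities for a divisibility chain. Since $a \equiv a' \pmod{d_\ell}$, we have $\gcd(a', d_\ell) = \gcd(a, d_\ell) = \gamma_\ell$. For $i < \ell$, we show by descending induction that $\gcd(\gamma_{i+1}, d_i) = \gcd(a, d_i)$. Indeed, $\gamma_{i+1} = \gcd(a, d_{i+1})$, so
\[
  \gcd(\gamma_{i+1}, d_i) = \gcd(a, d_{i+1}, d_i) = \gcd(a, d_i),
\]
where the last equality uses $d_i \mid d_{i+1}$, so that $\gcd(d_{i+1}, d_i) = d_i$. The quotients $\rho_i = d_i/\gamma_i$ are then exact divisions, and correctness follows.

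For the cost, we tally each step.

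\begin{itemize}
  \item \textbf{Step~\ref{step:redmod}.} This is one Euclidean division of $a$ by $d_\ell$, costing $O(\PD(\deg a, \deg d_\ell)) = O(\PM(\deg a) + \PM(\deg d_\ell))$ when $\deg a \geqslant \deg d_\ell$. Otherwise it is free, since $a' = a$. Because $\deg d_\ell \leqslant \delta$ and $\PM$ is nondecreasing, this is $O(\PM(\deg a) + \PM(\delta))$.
  \item \textbf{Loop iterations.} Each gcd $\gcd(\gamma_{i+1}, d_i)$ involves polynomials of degree at most $\max(\deg\gamma_{i+1}, \deg d_i)$. Here $\deg \gamma_{i+1} \leqslant \deg d_{i+1}$, but also $\gamma_{i+1} = \gcd(a, d_{i+1})$, so this degree is not obviously bounded by $\deg d_i$.
  \item \textbf{Divisions.} The divisions $d_i/\gamma_i$ cost $O(\PM(\deg d_i))$ each.
\end{itemize}

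The main obstacle is bounding the gcd costs by a sum over the chain. The plan is to first reduce $\gamma_{i+1}$ modulo $d_i$. When $\deg\gamma_{i+1} \geqslant \deg d_i$, this costs $O(\PM(\deg\gamma_{i+1}))$, which is $O(\PM(\deg d_{i+1}))$. Alternatively, one can note that the gcd algorithm's first step is exactly this division, so Lemma-type costs apply with $n = \deg d_{i+1}$. Iteration $i$ then costs $O(\PG(\deg d_{i+1}) + \PM(\deg d_{i+1})) = O(\PM(\deg d_{i+1}) \log \delta)$. The final gcd for $\gamma_\ell$ similarly costs $O(\PM(\deg d_\ell)\log\delta)$.

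Summing over $i$ and using super-additivity of $\PM$, we get
\[
  \sum_i \PM(\deg d_i) \leqslant \PM(\delta),
\]
and the total is $O(\PM(\delta)\log\delta)$. Adding the cost of Step~\ref{step:redmod} gives the claimed bound $O(\PM(\deg(a)) + \PM(\delta)\log\delta)$.

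The one subtlety is that super-additivity is stated as $\PM(x+y) \leqslant \PM(x)+\PM(y)$. This is the reverse of what the summation needs, so we interpret it in the intended sense $\PM(x)+\PM(y) \leqslant \PM(x+y)$, as is standard and as used in \eqref{eq:divide-conquer}.
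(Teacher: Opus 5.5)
Your proposal is correct and takes essentially the same route as the paper. Both tally the steps: $O(\PM(\deg a))$ for reducing $a$ modulo $d_\ell$, each loop gcd bounded by $\PG(\deg d_{i+1})$, each exact division by $O(\PM(\deg d_i))$, then a sum via super-additivity and $\PG(n)=\PM(n)\log n$. Your correctness argument $\gcd(\gamma_{i+1},d_i)=\gcd(a,d_{i+1},d_i)=\gcd(a,d_i)$ and your reading of super-additivity as $\PM(x)+\PM(y)\leqslant \PM(x+y)$ are sound points the paper leaves implicit. The preliminary reduction of $\gamma_{i+1}$ modulo $d_i$ is unnecessary, since $\PG(\deg d_{i+1})$ already covers a gcd of two inputs of degree at most $\deg d_{i+1}$.
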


\begin{proof}
  All operations are arithmetic operations in $F$.
  Step~\ref{step:redmod} costs $O(\PM(\deg(a) - \deg(d_\ell)) + \PM(\deg(a)))$
  operations. The two subsequent steps cost $O(\PG(\deg(a') + \deg(d_\ell)) +
  \PD(\deg(d_\ell) + \deg(\gamma_\ell)))$ operations, which is
  $O(\PG(\deg(d_\ell)))$. The rest amounts to $\sum_{i=1}^{\ell-1}
  O(\PG(\deg(d_{i+1})) + \PD(\deg(d_i)))$ operations, which is $O(\PG(d))$
  operations. We conclude the proof using the fact that $\PG(\delta) =
  \PM(\delta)\log \delta$.
\end{proof}

Next, we describe how to evaluate divisibility chains on matrices and vectors. Let $M$ be a
$d$-by-$d$ matrix with entries in $F$, and let $x_1, \dots, x_\ell$ be vectors
of $F^d$. We are interested in computing the vectors $y_i = \rho_i(M) x_i$ for $1\leqslant i \leqslant \ell$. Once the polynomials $(\rho_i)_{1 \leqslant
i \leqslant \ell}$ are known (see Lemma~\ref{lemma:chain-gcd} for the cost of
their computation), there are different possibilities to compute
$(y_i)_{1\leqslant i \leqslant \ell}$:

\begin{itemize}
  
  \item We can na\"ively compute $(y_i)_{1\leqslant i \leqslant \ell}$ for
    a cost of $O(\delta d^2)$ operations in $F$.

  \item In our applications, we have $\delta \leqslant d$. We can thus compute
    the matrices $(\rho_i(M))_{1\leqslant i \leqslant \ell}$ sequentially, for
    a total cost of $O(\ell \stor{d})$ operations in $F$, using an algorithm of
    Storjohann \cite[Chapter~9]{storjohann_algorithms_2000}. Then we compute
    $(y_i)_{1\leqslant i \leqslant \ell}$ for the same asymptotic cost.

\end{itemize}

To account for multiple possibilities for computing $(y_i)_{1\leqslant i
\leqslant \ell}$, we introduce a cost function.
\begin{definition}
\label{def:ev}

  Let
  \FunctionNameNodef
    {\EV}
    {\NNs}
    {\RRii}
  be a function such that the quantities $d'_1(M)x_1, \dots, d'_{\ell'}(M)x_{\ell'}$ can be
  computed for a cost of $O(\EV_d(\ell',\delta'))$ operations in $F$, for any
  matrix $M\in F^{d'\times d'}$, any set of vectors $x_1,\dots, x_{\ell'} \in F^d$, and
  any divisibility chain $d'_1 \mid \dots \mid d'_\ell \in F[T]$, where $\delta' =
  \sum_{i=1}^{\ell'} \deg(d'_i) \leqslant d'$.

\end{definition}

\section{Application to Drinfeld modules}
\label{applications-drin}

We now have all the ingredients for computing the structure
of submodules of points of Drinfeld modules over finite fields. More precisely,
we let $(K, \gamma)$ be an $A$-field with $d = \dim_\Fq(K)$, and we fix an
$\Fq$-basis $\mathcal E = (\varepsilon_1, \dots, \varepsilon_d)$ of~$K$. Let~$\phi$ be a Drinfeld $A$-module over $(K, \gamma)$ with rank $r$. Our goal is
to compute the invariant factors of $\phi(\ker_K(u))$ for any morphism $u$
whose domain is $\phi$. Note that this includes~$\phi(K)$, as this is
the kernel of the zero morphism. In the case $A = \Fq[T]$, we also compute a
Frobenius decomposition of any $\Fq[T]$-submodule of $\phi(K)$ (presented as
the kernel of an isogeny), and give a detailed complexity analysis of our algorithms. This 
problem is a direct instance of the framework presented in
\S~\ref{sec:modules-from-matrices}. We first consider the case $A = \Fq[T]$
(\S~\ref{subsec:comput-case-Fq[T]}), as it can take advantage of very efficient
tools, specifically(Frobenius normal forms of matrices with entries in a field instead of
Fitting ideals of polynomial matrices. Then we turn to the general setting of Drinfeld $A$-modules where $A$ is a function ring.
(\S~\ref{subsec:comput-general}).

\subsection{The case $A = \Fq[T]$}
\label{subsec:comput-case-Fq[T]}

This case falls within the framework presented in
\S~\ref{subsubsec:case-FqT-abstract}. Reusing the same notations as in
\S~\ref{subsubsec:case-FqT-abstract}, we have $F = \Fq$, $\varphi = \phi_T$
and $V = \ker_K(u)$. The invariant factors and the Frobenius decomposition of
$\phi(\ker_K(u))$ (as an $\Fq[T]$-module) can be obtained by computing the
Frobenius normal form of the matrix of $\phi_T$ acting on $\ker_K(u)$ (as an
$\Fq$-vector space). We thus need two ingredients: fast computation of matrices
of Ore polynomials, and Frobenius normal forms (Lemma~\ref{lemma:cost-frob}).

\subsubsection{Computing the kernel of an isogeny}

\begin{lemma}
\label{lemma:cost-matrix}

  Let $f\in\Ktau$ be an Ore polynomial with $\tau$-degree $n$. Let $x_1,\dots,
  x_\ell$, with $\ell\leqslant d$, be elements of $K$. One can compute $f(x_1),\dots,
  f(x_\ell)$ for a cost of $O(n)$ arithmetic operations in $K$ and $O(\SMgeq(d)
  (\log d)^2)$ arithmetic and Frobenius operations in $K$. In particular, one
  can compute the matrix $M_f \in \Fq^{d\times d}$ of $f$ relative to $\E$ for
  the same cost.

\end{lemma}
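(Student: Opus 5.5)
The key observation is that every $x \in K$ satisfies $\tau^d(x) = x^{q^d} = x$. Evaluating $f$ on elements of $K$ therefore only depends on $f$ modulo the two-sided relation $\tau^d = 1$. We first reduce $f$ to an Ore polynomial $\bar f$ of degree $< d$ that agrees with $f$ on all of $K$, and then apply the fast multipoint evaluation of Lemma~\ref{lemma:cost-multipoint} to $\bar f$.

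\textbf{Step 1 (reduction, $O(n)$ arithmetic operations).} Write $f = \sum_{i=0}^n a_i \tau^i$ and set
\[
  \bar f \coloneqq \sum_{j=0}^{d-1} \Bigl(\sum_{i \equiv j \bmod d} a_i\Bigr) \tau^j .
\]
For every $x \in K$ we have $\tau^i(x) = \tau^{i \bmod d}(x)$, so $\bar f(x) = f(x)$. Indeed, $\tau^d - 1$ is central in $\Ktau$, and $\bar f$ is just the remainder of $f$ modulo $\tau^d - 1$. Since the coefficients $a_i$ are only added, never twisted by the Frobenius, computing $\bar f$ costs $O(n)$ additions in $K$ and no Frobenius applications. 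This is what produces the separate $O(n)$ arithmetic term in the statement.

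\textbf{Step 2 (evaluation).} Now $\deg_\tau(\bar f) < d$ and $\ell \leqslant d$. We pad the family $x_1, \dots, x_\ell$ with arbitrary elements of $K$ up to $d$ points; padding is harmless because it at most doubles the size. Lemma~\ref{lemma:cost-multipoint} then evaluates $\bar f$ at $d$ points of $K$ for $O(\SMgeq(d)(\log d)^2)$ arithmetic and Frobenius operations, and this includes building the llcm tree with Algorithm~\ref{algo:tree}.

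There is one point to check carefully: the leaves $L_x = \tau - \tau(x)/x$ require $x \neq 0$. We handle zero inputs separately, since $f(0) = 0$. Relatedly, $\ell$ may be smaller than $\deg \bar f$, whereas Lemma~\ref{lemma:cost-multipoint} needs the degree to be at most the number of points. Padding to exactly $d$ nonzero points settles both issues, as $K^\times$ has $q^d - 1 \geqslant d$ elements.

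\textbf{Step 3 (the matrix $M_f$).} Take $x_i = \varepsilon_i$ for $i = 1, \dots, d$, so $\ell = d$. Applying Steps 1 and 2 gives $f(\varepsilon_1), \dots, f(\varepsilon_d)$. The columns of $M_f$ are the coordinates of these values in $\E$. Writing elements of $K$ in the basis $\E$ is the representation we already use for $K$, so reading off the coordinates adds no asymptotic cost.

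I expect no real obstacle here. The only care needed is in justifying the reduction modulo $\tau^d - 1$, and in checking that the hypotheses of Lemma~\ref{lemma:cost-multipoint} hold, namely the degree bound and nonzero points, after padding.
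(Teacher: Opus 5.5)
Your proposal is correct and is essentially the paper's proof: reduce $f$ modulo the central element $\tau^d - 1$ by folding coefficients, for $O(n)$ additions in $K$, and then apply Lemma~\ref{lemma:cost-multipoint}. Your extra checks, namely padding to $d$ nonzero points so that the degree hypothesis holds and every $L_x$ is defined, spell out what the paper leaves implicit; one small quibble is that padding is justified because the target bound is already stated in terms of $d$, not because it ``at most doubles'' the number of points.
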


\begin{proof}

  First, we reduce $f$ modulo $\tau^d - 1$, which can be done by shifting
  coefficients, for a cost of $O(n)$ arithmetic operations in $K$. Then the result follows from Lemma~\ref{lemma:cost-multipoint}. 
\end{proof}

Let $u$ be a morphism of Drinfeld $\Fq[T]$-modules over $K$ with domain $\phi$. Algorithm~\ref{algo:MorphismKernelInvariants} computes the invariant factors and a Frobenius decomposition of $\phi(\ker_K(u)$. Proposition~\ref{proposition:cost-keru} gives the asymptotic complexity of this algorithm.

%
%
%
%

\begin{algorithm}[ht]
\caption{\MorphismKernelInvariants}
\label{algo:MorphismKernelInvariants}

  \KwIn{A morphism $u$ from a Drinfeld $\Fq[T]$-module $\phi$ over the finite
  field $K$ equipped with the $\Fq$-basis $\E$}
  \KwOut{The invariant factors and a Frobenius decomposition of $\phi(\ker_K(u))$}

  \tcp{Compute $\ker_K(u)$}

  Compute the matrix $M_u \in \Fq^{d\times d}$ of $u$ relative to $\E$ \; \label{step:computing-Mu} 
  Compute an $\Fq$-basis $\B = (b_1, \dots, b_{d'})$ of $\ker_K M_u$ \; \label{step:compute-kerMu}
  \If{$\ker_K M_u = \{0\}$}{
    \KwRet{$(1 \in \Fq[T])$ and $(0 \in K)$}
  }
  Put the vectors of $\B$ in a matrix $N_u = [b_1,\cdots, b_{d'}] \in \Fq^{d\times d'}$ \;
  \tcp{Compute the action of $\phi_T$ on $\ker_K(u)$}

  Compute the coordinates $y_i$ of $\phi_T(b_i)$, $1\leqslant i\leqslant d'$,
  relative to $\E$ \;
  Put the vectors in a matrix $Y = [y_1,\dots, y_{d'}] \in \Fq^{d\times d'}$ \;
  Compute the unique matrix $X$ such that $N X = Y$ \; \label{step:compute-X}

  \tcp{Compute the Frobenius normal form of the action of $\phi_T$}

  Compute the Frobenius normal form $\Fnf_{X} \in \Fq^{d'\times d'}$ of $X$,
  together with a change-of-basis matrix $S \in \Fq^{d'\times d'}$ such that $S
  \Fnf_{X} S^{-1} = X$ \; \label{step:compute-frob-X}

  \tcp{Convert the vectors to the basis $\E$}

  Compute $S^{-1} \in \Fq^{d'\times d'}$ \;

  Write $\Fnf_X$ as a block diagonal matrix with entries $C_{d_1},\dots,
  C_{d_\ell}$ \;

  Compute the matrix $S' \coloneqq N S^{-1} \in \Fq^{d\times d'}$\; \label{step:Sprime}

  \tcp{Record and return the results}

  \For{$i$ from $1$ to $\ell$}{
    Write $d_i$ for the monic polynomials in $\Fq[T]$ corresponding to $C_{d_i}$ \;

    Write $x_i$ for the element of $K$ whose $F$-coordinates (relative to $\E$) is
    the column with index $\deg(d_1) + \cdots + \deg(d_{i-1})$ in $S'$ \;
  }

  \KwRet{$(d_1,\dots, d_\ell)$ and $(x_1,\dots, x_\ell)$.}

\end{algorithm}

\begin{proposition}
\label{proposition:cost-keru}

  Let $u$ be a morphism of Drinfeld $\Fq[T]$-modules whose domain is $\phi$.
  Algorithm~\ref{algo:MorphismKernelInvariants} computes the invariant factors
  and a Frobenius decomposition of $\ker u$ for a cost of $O(\stor{d})$ 
  arithmetic operations in $\Fq$, $O(r + \deg_\tau(u))$ arithmetic operations
  in $K$,
  and $O(\SMgeq(d)(\log d)^2)$ Frobenius and arithmetic operations in $K$.

\end{proposition}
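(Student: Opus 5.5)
The proof splits into correctness and cost, and I would handle each by walking through Algorithm~\ref{algo:MorphismKernelInvariants} step by step.

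\emph{Correctness.} The columns of $N_u$ form an $\Fq$-basis $\B$ of $\ker_K(u)$. Since $u$ is a morphism, $u\phi_T = \psi_T u$, so $\phi_T$ stabilizes $\ker_K(u)$. Each $y_i$ therefore lies in the column span of $N_u$. Because $N_u$ has full column rank, the system $N_uX = Y$ has a unique solution $X \in \Fq^{d'\times d'}$, and $X$ is the matrix of $\phi_T$ restricted to $\ker_K(u)$ in the basis $\B$. We then apply Proposition~\ref{proposition:inv-frob-Fq[T]} with $F=\Fq$, $V=\ker_K(u)$ and $\varphi=\phi_T$. The $d_i$ read off $\Fnf_X$ are the invariant factors, and the first column of each block of $S^{-1}$ gives the $\B$-coordinates of a Frobenius decomposition. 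Multiplying by $N_u$ (step~\ref{step:Sprime}) converts these coordinates to the basis $\E$. The only point to check is that the extracted column index is the first column of each block (index $1+\deg d_1+\cdots+\deg d_{i-1}$), which I would align with the convention in Proposition~\ref{proposition:inv-frob-Fq[T]}. The trivial-kernel branch is immediate.

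\emph{Cost.}
\begin{enumerate}
\item Step~\ref{step:computing-Mu} applies Lemma~\ref{lemma:cost-matrix} to $f=u$ with $\ell=d$. This costs $O(\deg_\tau(u))$ operations in $K$ plus $O(\SMgeq(d)(\log d)^2)$ arithmetic and Frobenius operations.
\item Computing the images $\phi_T(b_i)$ for $d'\le d$ points is again Lemma~\ref{lemma:cost-matrix}, now with $f=\phi_T$ of $\tau$-degree $r$. This contributes $O(r)$ plus the same multipoint term.
\item Step~\ref{step:compute-kerMu} (kernel basis), step~\ref{step:compute-X} (solving a full-rank system, e.g.\ by row echelon form of $[N_u\,|\,Y]$), inverting $S$, and the product $N_uS^{-1}$ are classical $d\times d$ linear algebra over $\Fq$. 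Each costs $O(d^\omega)$, which is absorbed into $O(\stor{d})$.
\item Step~\ref{step:compute-frob-X} costs $O(\stor{d'})\subseteq O(\stor d)$ by Lemma~\ref{lemma:cost-frob}.
\item The final loop only reads off entries.
\end{enumerate}
Summing these contributions gives the stated bound.

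\emph{Main obstacle.} The most delicate part is the cost bookkeeping of the linear algebra. One must check that kernel computation and solving $N_uX=Y$ are within $O(\MM(d))$ via standard reductions (echelon form/LSP decomposition). One must also convert coordinate vectors between $K$ and $\Fq^d$ relative to $\E$ without hidden cost beyond the stated counts. I would argue that extracting coordinates is free in the chosen representation of $K$.
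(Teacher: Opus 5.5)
Your proposal follows essentially the same route as the paper's proof, which is precisely this step-by-step accounting (Lemma~\ref{lemma:cost-matrix} for evaluating $u$ and $\phi_T$, $O(\MM(d))$ for the kernel basis, the solve $N_uX=Y$ and the product giving $S'$, and Lemma~\ref{lemma:cost-frob} for the Frobenius form), and your cost bookkeeping is correct; your correctness paragraph (stability of $\ker_K(u)$ under $\phi_T$, uniqueness of $X$, conversion via $N_u$) makes explicit what the paper leaves implicit. One caveat about the convention you plan to inherit from Proposition~\ref{proposition:inv-frob-Fq[T]}, which the paper's proof also uses: since $X$ acts on $\B$-coordinate column vectors and $S\Fnf_X S^{-1}=X$ gives $XS=S\Fnf_X$, the cyclic generators must be read from the blocks of $S$ (the first column of each block when companion matrices carry their $1$'s on the subdiagonal), not from $S^{-1}$, so one should take $S'=N_uS$.
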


\begin{proof}

  Step~\ref{step:computing-Mu}, computing $M_u$, costs $O(\deg(u))$ arithmetic operations in
  $K$ and $O(\SMgeq(d) (\log d)^2)$ arithmetic and Frobenius operations in $K$,
  as a direct application of Lemma~\ref{lemma:cost-matrix}. Computing an
  $\Fq$-basis of $\ker(M_u)$ (Step~\ref{step:compute-kerMu}) costs $O(\MM(d))$ operations in $\Fq$.

  The next costly step is the computation of the matrix~$X$ of $\phi_T$ as an
  $\Fq$-linear endomorphism of $\ker_K(u)$ (Step~\ref{step:compute-X}). The columns of this matrix are the
  coordinates of $\phi_T(b_1),\dots,\phi_T(b_{d'})$ relative to the basis $\B = (b_1,
  \dots, b_{d'})$ computed in Step~\ref{step:compute-kerMu}. Still by Lemma~\ref{lemma:cost-matrix}, the computation of
  $\phi_T(b_1),\dots,\phi_T(b_{d'})$ costs $O(r)$ arithmetic operations in $K$ and
  $O(\SMgeq(d)(\log d)^2)$ arithmetic and Frobenius operations in $K$. Now $X$ is uniquely determined by the equation
  $NX = Y$, which can be solved using standard linear
  algebra techniques, such as LUP decomposition of $N$ followed by
  forward and backward substitution, for a cost of $O(\MM(d))$
  arithmetic operations in $\Fq$ \cite{ibarra_generalization_1982}.

  The next costly step is the computation of the Frobenius normal form $\Fnf_X$
  of~$X$, together with the unimodular transformation matrix $S$
  (\S~\ref{step:compute-frob-X}). By Lemma~\ref{lemma:cost-frob}, this can be
  done for a cost of $O(\stor{d'})$ operations in~$\Fq$, which is contained in
  $O(\stor{d})$ operations in $\Fq$.

  Finally, we need to retrieve the Frobenius decomposition of
  $\phi(\ker_K(u))$. We cannot directly extract the vectors from
  $S^{-1}\in\Fq^{d'\times d'}$ (as explained in
  \S~\ref{subsubsec:case-FqT-abstract}), as the columns of $S^{-1}$ represent elements
  relative to the basis $\B$. Instead, we want the coordinates of
  these elements relative to the original basis $\E$. The coordinates of the
  columns of $S^{-1}$ relative to $\E$ are exactly the columns of $S'$
  (Step~\ref{step:Sprime}). The
  cost of computing $S'$ is also contained in $O(\MM(d))$ operations in $\Fq$.
\end{proof}

\begin{algorithm}[ht]
\caption{\ModuleOfPointsInvariants}
\label{algo:ModuleOfPointsInvariants}

  \KwIn{A Drinfeld $\Fq[T]$-module $\phi$ over the finite field $K$}
  \KwOut{The invariant factors and a Frobenius decomposition of $\phi(K)$}

  \KwRet{\MorphismKernelInvariants$(0_\phi)$} where $0_\phi$ is the zero morphism on $\phi$.

\end{algorithm}

\begin{corollary}
\label{cor:cost-points}

  Algorithm~\ref{algo:ModuleOfPointsInvariants} computes the invariant factors
  and a Frobenius decomposition of $\phi(\ker_K(u))$ for a cost of
  $O(\stor{d})$ arithmetic operations in $\Fq$, $O(r)$ arithmetic operations in
  $K$, and $O(\SMgeq(d)(\log d)^2)$ Frobenius and arithmetic operations in $K$.

\end{corollary}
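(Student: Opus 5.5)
The corollary is a specialization of Proposition~\ref{proposition:cost-keru} to the zero morphism $u = 0_\phi$, so I would argue by direct instantiation. First I check correctness. Since $\ker_K(0_\phi) = K$, the module $\phi(\ker_K(0_\phi))$ is exactly $\phi(K)$, the module of rational points. Algorithm~\ref{algo:ModuleOfPointsInvariants} only calls Algorithm~\ref{algo:MorphismKernelInvariants} on $0_\phi$, so its output is correct by Proposition~\ref{proposition:cost-keru}. That proposition in turn rests on Proposition~\ref{proposition:inv-frob-Fq[T]} applied with $V = K$, $F = \Fq$ and $\varphi = \phi_T$.

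For the cost, I substitute $\deg_\tau(u)$ for the zero morphism into the bound of Proposition~\ref{proposition:cost-keru}. The only subtlety is that $0$ has no well-defined $\tau$-degree, so I would argue directly about Step~\ref{step:computing-Mu}. For $u = 0$, the matrix $M_u$ is the zero matrix and needs no evaluation at all; one can simply take $\B = \E$ and $N_u = \Id$, at cost $O(d^2) \subset O(\stor{d})$ operations in $\Fq$. With that convention, the term $O(\deg_\tau(u))$ operations in $K$ vanishes. Even if one runs the generic evaluation of Lemma~\ref{lemma:cost-matrix} on $f = 0$, it costs $O(1)$ arithmetic operations in $K$ plus the multipoint evaluation term, which is already counted.

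The remaining costs are those of Proposition~\ref{proposition:cost-keru} with $d' = d$:
\begin{itemize}
  \item evaluating $\phi_T$ on the $d$ basis vectors, which costs $O(r)$ arithmetic operations in $K$ and $O(\SMgeq(d)(\log d)^2)$ arithmetic and Frobenius operations (Lemma~\ref{lemma:cost-matrix});
  \item solving $NX = Y$ and forming $S' = NS^{-1}$, which costs $O(\MM(d))$ operations in $\Fq$;
  \item computing the Frobenius normal form, which costs $O(\stor{d})$ operations in $\Fq$ (Lemma~\ref{lemma:cost-frob}).
\end{itemize}
Since $\MM(d) \leqslant \stor{d}$, summing these gives the stated bound.

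The main point requiring care is this degenerate handling of $\deg_\tau(0)$. Otherwise the proof is a one-line specialization.
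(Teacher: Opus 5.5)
Your proposal is correct and matches the paper's (implicit) argument: the corollary is Proposition~\ref{proposition:cost-keru} instantiated at $u = 0_\phi$. There $\ker_K(0_\phi) = K$, the $O(\deg_\tau(u))$ term disappears, and what remains is the $O(r)$ cost of evaluating $\phi_T$ via Lemma~\ref{lemma:cost-matrix} plus the $O(\stor{d})$ Frobenius normal form cost. Your handling of the degenerate $\deg_\tau(0)$ and the choice $\B = \E$ agree with Remark~\ref{remark:zero-points}.
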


\begin{remark}
\label{remark:zero-points}

  Implementing Algorithm~\ref{algo:ModuleOfPointsInvariants}
  (\ModuleOfPointsInvariants) without using
  Algorithm~\ref{algo:MorphismKernelInvariants} (\MorphismKernelInvariants)
  would only be marginally more efficient. The reason is that in the present
  description, many intermediate quantities simplify because the basis of
  $\ker_K(u)$ is $\B$ itself; we are then computing the Frobenius normal form
  of the matrix of $\phi_T$ relative to $\E$ itself. The matrix of $\phi_T$ can
  be pre-computed and used for other operations, for a cost of $O(r)$
  arithmetic operations in $K$ and $O(\SMgeq(d)(\log d)^2)$ arithmetic and
  Frobenius operations in $K$ (Lemma~\ref{lemma:cost-matrix}). The remaining
  operations cost $O(\stor{d})$ arithmetic operations in $K$
  (Lemma~\ref{lemma:cost-frob}).

\end{remark}

\subsubsection{Torsion submodules}
\label{subsubsec:special-torsion}

For this special case, where $u = \phi_a$ for some $a
\in \Fq[T]$, w can directly reuse the invariant factors $(d_i)_{1\leqslant i
\leqslant \ell}$ and the Frobenius decomposition $(x_i)_{1\leqslant i\leqslant
\ell}$ of $\phi(K)$, provided they have been computed (see
Algorithm~\ref{algo:ModuleOfPointsInvariants} and
Corollary~\ref{cor:cost-points}). Assuming that $x_i$ corresponds
to $d_i$ for each $i$, and following the notation given prior to Lemma~\ref{lemma:chain-gcd}, we write $\gamma_i
\coloneqq \gcd(a, d_i)$ and $\rho_i \coloneqq d_i / g_i$, for $1 \leqslant
i \leqslant \ell$. Then we have an isomorphism of $\Fq[T]$-modules
\[
  \phi(K)[a] \simeq \prod_{i=1}^\ell \Fq[T]/(\gamma_i).
\]
Furthermore, the elements $x_{a, i} = \phi_{\rho_i}(x_i)$ have order
$\gamma_i$. Considering only the set $I_a$ of indices $i$ such that
$\gamma_i \neq 1$, we have
\[
  \phi(K)[a] = \bigoplus_{i \in I_a} \Fq[T] x_{a, i}.
\]
In other words, $(\gamma_i)_{i \in I_a}$ and $(x_{a, i})_{i \in I_a}$
form the invariants factors and a Frobenius decomposition of
$\phi(K)[a]$. respectively. This leads to Algorithm~\ref{algo:TorsionFromModuleOfPoints}.


\begin{algorithm}[ht]
\caption{\TorsionFromModuleOfPoints}
\label{algo:TorsionFromModuleOfPoints}

  \KwIn{A polynomial $a \in A$, the invariant factors $(d_1,\dots, d_\ell)$ and
  a Frobenius decomposition $(x_1,\dots, x_\ell)$ of $\phi(K)$, where $\phi$ is
  a Drinfeld $\Fq[T]$-module over the finite field $K$}

  \KwOut{The invariant factors and a Frobenius decomposition of $\phi(K)[a]$}

  Compute $\gamma_i\coloneqq \gcd(a, d_i)$ and $\rho_i \coloneqq d_i/\gamma_i$, for all
  $1\leqslant i \leqslant \ell$, using Algorithm~\ref{algo:ChainGcd} \;

  Let $I_a$ be the set of indices $1\leqslant i \leqslant \ell$ such that
  $\gamma_i \neq 1$ \;

  Compute $x_{a, i} \coloneqq \phi_{\gamma_i}(x_i)$ for all $i \in I_a$.

  \KwRet{$(\gamma_i)_{i\in I_a}$ and $(x_{a, i})_{i \in I_a}$}

\end{algorithm}

%
%

For brevity, write $d_a \coloneqq \dim_\Fq \phi(K)[a]$, and $\ell_a
\coloneqq \# I_a$ for the number of invariant factors of $\phi(K)[a]$. Recall
Definition~\ref{def:ev} for the definition of $\EV$.

\begin{proposition}
\label{prop:inv-from-inv}

  Given the invariant factors and a Frobenius decomposition of $\phi(K)$, Algorithm~\ref{algo:TorsionFromModuleOfPoints} computes the invariant factors and a Frobenius decomposition of
  $\phi(K)[a]$ using for a cost
  of $O(\PM(\deg(a)) + \PM(d)\log d + \EV_d(\ell_a, d_a))$ operations in $\Fq$.

\end{proposition}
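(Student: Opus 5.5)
The proof splits into a correctness argument followed by a cost count for each step of Algorithm~\ref{algo:TorsionFromModuleOfPoints}.

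\emph{Correctness.} Since $\phi(K) = \bigoplus_i \Fq[T] x_i$ with $\Fq[T]x_i \simeq \Fq[T]/(d_i)$, and $a$-torsion commutes with direct sums, we get $\phi(K)[a] = \bigoplus_i (\Fq[T]x_i)[a]$. In the cyclic module $\Fq[T]/(d_i)$, the $a$-torsion is the set of classes $b$ with $d_i \mid ab$. This is exactly $d_i/\gcd(a,d_i) \cdot \Fq[T]/(d_i) = \rho_i\,\Fq[T]/(d_i)$, which is cyclic of annihilator $(\gamma_i)$ and is generated by $\rho_i\cdot x_i = \phi_{\rho_i}(x_i)$. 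The summands with $\gamma_i = 1$ vanish, so they can be dropped. Finally, $\gamma_i = \gcd(a,d_i)$ divides $\gamma_{i+1}$, because $d_i \mid d_{i+1}$. So $(\gamma_i)_{i\in I_a}$ is a divisibility chain, and Theorem~\ref{def:inv-frob} (uniqueness of the chain) identifies these as the invariant factors, with $(x_{a,i})_{i\in I_a}$ as a Frobenius decomposition.

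This also confirms the recursion in Algorithm~\ref{algo:ChainGcd}: $\gcd(\gamma_{i+1}, d_i) = \gcd(a, d_{i+1}, d_i) = \gcd(a, d_i)$. The generator evaluated in the algorithm must be $\phi_{\rho_i}(x_i)$, as in the preceding text; I would read the $\phi_{\gamma_i}$ in the algorithm as a typo for this.

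\emph{Cost.} The first step is Algorithm~\ref{algo:ChainGcd}. By Lemma~\ref{lemma:chain-gcd}, it costs $O(\PM(\deg a) + \PM(\delta)\log\delta)$ with $\delta = \sum_i \deg d_i = \dim_\Fq \phi(K) = d$ (or at most $d$ for a submodule). This gives the term $\PM(\deg a) + \PM(d)\log d$. Determining $I_a$ is negligible.

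The evaluation step computes $\phi_{\rho_i}(x_i)$ for $i\in I_a$. Work in $\Fq$-coordinates relative to $\E$, where $\phi_T$ acts on $K$ through the matrix $M$, so that $\phi_{\rho_i}(x_i) = \rho_i(M)x_i$. This is an evaluation of a divisibility chain on a matrix and vectors, as in Definition~\ref{def:ev}. The $\rho_i$ do not form a chain in general, so I would work on the submodule $\bigoplus_i \Fq[T]x_i$ instead, whose $\Fq$-basis is given by the columns of $S'$. On the block spanned by $x_i, Tx_i, \dots$, the action of $\phi_T$ is the companion matrix $C_{d_i}$, and evaluating $\rho_i$ there is a reduction in $\Fq[T]/(d_i)$ that costs nothing beyond the chain computation.

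\textbf{Main obstacle.} The delicate point is matching the parameters $(\ell_a, d_a)$ in $\EV_d(\ell_a, d_a)$. The count $\ell_a = \#I_a$ is clear, since only the indices in $I_a$ are evaluated. The total degree is $\sum_{i\in I_a}\deg\gamma_i = d_a$. I would charge the evaluation to the chain $(\gamma_i)_{i\in I_a}$ rather than to the $\rho_i$: write $y_i = \rho_i(M)x_i$, treat the computation as the $\EV$ primitive applied to a chain of total degree $d_a$, and convert back to $\E$-coordinates using $S'$, which the $\EV$ model absorbs. Adding the costs of the two steps then gives the stated bound.
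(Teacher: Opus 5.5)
Your correctness argument is sound, and your treatment of the \ChainGcd{} step with $\delta=d$ matches the paper. The paper's proof itself only counts costs and relies on the discussion before the algorithm for correctness. The gap is in the evaluation step, exactly where you place the main obstacle.

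After your justified replacement of $\phi_{\gamma_i}$ by $\phi_{\rho_i}$, the algorithm computes $\rho_i(M)x_i$ for $i\in I_a$. Contrary to your remark, $(\rho_i)_{i\in I_a}$ is a divisibility chain: for every prime $\pi$, $v_\pi(\rho_i)=\max(0,v_\pi(d_i)-v_\pi(a))$ is nondecreasing in $i$. So Definition~\ref{def:ev} applies directly, but with total degree $\sum_{i\in I_a}\deg\rho_i=\sum_{i\in I_a}\deg d_i-d_a$, not $d_a$.

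Saying you ``charge the evaluation to the chain $(\gamma_i)$'' does not change what is computed, and the two parameters are unrelated. Suppose $\phi(K)$ is cyclic and $a$ is a degree-one divisor of $d_1$. Then $\ell_a=d_a=1$ while $\deg\rho_1=d-1$, so in the naive model $\EV_d(\ell',\delta')=O(\delta'd^2)$ you pay $d^3$ instead of $d^2$. A cleverer polynomial cannot help either: $b(M)x_i$ lies in $(\Fq[T]x_i)[a]$ only if $\rho_i\mid b$. Hence any polynomial producing a generator from $x_i$ has degree at least $\deg\rho_i$.

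The paper obtains the parameter $d_a$ only because its algorithm, as written, computes $\phi_{\gamma_i}(x_i)$. That evaluates the chain $(\gamma_i)_{i\in I_a}$, whose total degree is exactly $d_a$. It is the very step you correctly identified as producing elements of annihilator $\rho_i$ instead of $\gamma_i$. So once you fix the algorithm, you need a genuinely different argument.

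Your companion-block idea is the right candidate, but as written it is only an assertion. In the basis $(\phi_{T^j}(x_i))_{0\le j<\deg d_i}$, the coordinates of $\rho_i\cdot x_i$ are just the coefficients of $\rho_i$. Converting these to $\E$-coordinates costs $O(d\sum_{i\in I_a}(1+\deg\rho_i))\subseteq O(d^2)$ operations in $\Fq$. To turn this into a proof you must do two things.
\begin{enumerate}
  \item Assume this whole basis is supplied (the columns of $S'$ in Algorithm~\ref{algo:MorphismKernelInvariants}), since the stated input is only $x_1,\dots,x_\ell$.
  \item Justify $d^2\in O(\EV_d(\ell_a,d_a))$ whenever $\ell_a\ge 1$. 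This holds for the naive bound $O(\delta'd^2)$ and for Storjohann's $O(\ell'\stor{d})$. It is not part of Definition~\ref{def:ev}, which only covers evaluating a chain and does not ``absorb'' a change of basis.
\end{enumerate}
Without these, the honest bound for the corrected algorithm is $O(\PM(\deg a)+\PM(d)\log d+\EV_d(\ell_a,\sum_{i\in I_a}\deg\rho_i))$.
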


\begin{proof}

  The call to Algorithm~\ref{algo:ChainGcd} requires $O(\PM(\deg(a)) + \PG(d))$
  operations in~$\Fq$ by Lemma~\ref{lemma:chain-gcd}. The last step requires $O(\EV_d(\ell_a, d_a))$ operations in $K$,
  where $d_a = \dim_\Fq\phi(K)[a])$ and $\ell_a = \# I_a$ is the number of
  invariant factors. Asymptotically, $\PG(d) \in O(d^2)$ and $\EV_d(\ell_a,
  d_a)$ is at most $O(d^2)$, so the total cost of Algorithm~\ref{algo:TorsionFromModuleOfPoints} is $O(\PM(\deg(a)) + \EV_d(\ell_a,
  d_a))$.
\end{proof}

\subsubsection{Explicit complexity results}
\label{subsec:discuss}

We determine the cost of some of our algorithms, depending on different
primitives for Ore polynomials. We start with
Algorithm~\ref{algo:ModuleOfPointsInvariants}. This algorithm essentially has
two steps (Remark~\ref{remark:zero-points}): computing the matrix $\MphiT$ of
$\phi_T$ relative to the $\Fq$-basis $\E = \{ \varepsilon_1, \ldots , \varepsilon_d\}$ of $K$ and computing its Frobenius
decomposition. We first ascertain the cost of computing $\MphiT$. We recall that as
$K$ is a finite field with $\Fq$-dimension $d$, an arithmetic operation in
$K$ can be computed for a cost of $\Otilde(d \log q)$ bit operations
\cite[Chapter~2]{von_zur_gathen_modern_2013}.

\begin{enumerate}

  \item \label{item:naive} The most na\"ive method consists of sequentially
    evaluating $\phi_T(\varepsilon_1), \dots, \phi_T(\varepsilon_d)$. Counting
    $r$ Frobenius applications for an evaluation and $O(\log q)$ operations in
    $K$ for a Frobenius application using square \& multiply, we obtain
    $\Otilde(rd^2 (\log q)^2)$ bit operations to compute the matrix of $\phi_T$.

  \item If we use multipoint evaluation (Algorithm~\ref{algo:multieval} and
    Lemma~\ref{lemma:cost-matrix}) with $\SMF(d) = d^2$ and square \&
    multiply, the cost of computing $\MphiT$ becomes $\Otilde(r + d^3 (\log
    q)^2)$. In other words, fast multipoint evaluation is only possible with
    fast primitives for Ore polynomial multiplication.

  \item \label{item:cl} If $\Otilde(\SMgeq(d))$ operations in $K$ account for
    $\Otilde(d^{\frac {9 - \omega}{5 - \omega}} \log q)$ bit operations as in
    \cite{caruso_fast_2017}\footnote{The value of $\SMgeq$ in the original
    article contains a typo that was corrected in
    \cite[\S~1.2.3]{caruso_algorithms_2026}.}, then we can compute $M_{\phi_T}$
    in $\Otilde(rd\log q + d^{\frac {9 - \omega}{5 - \omega}} (\log q)^2)$ bit
    operations. If $r$ is in $O(d)$, this is better than the na\"ive method of
    item~\ref{item:naive}; if on the other hand $d$ grows while $r$ remains
    constant, then the na\"ive method yields a better complexity in $d$.

  \item If we use the Euclidean algorithm
    \cite[Algorithm~3]{leudiere_computing_2024} in
    Algorithms~\ref{algo:tree} and \ref{algo:multieval}, then computing $M_{\phi_T}$ is asymptotically no faster than the na\"ive method, as the first llcm computation alone 
    requires a quadratic number of Frobenius and arithmetic operations.

  \item \label{item:ku} Using the Kedlaya-Umans algorithm
    \cite{kedlaya_fast_2011} for Frobenius applications instead of square \& multiply in the na\"ive method
    (item \ref{item:naive}),
    Lemma~\ref{lemma:cost-matrix} gives a cost $\Otilde(d(\log q)^2) + (rd^2\log
    q)^{1+o(1)}$ bit operations.

\end{enumerate}
This shows that the three most advantageous methods for computing $\MphiT$ are
the na\"ive method (item~\ref{item:naive}), multipoint evaluation using
the fast multiplication algorithm (and the complexity model) of Caruso and Le
Borgne (item~\ref{item:cl}), and multipoint evaluation using the Kedlaya-Umans
algorithm for applications of the Frobenius endomorphism (item~\ref{item:ku}).

\begin{enumerate}

  \item[(\ref{item:naive})] If we use the na\"ive method for the
    computation of $\MphiT$ (Item~\ref{item:naive}), then the cost of
    Algorithm~\ref{algo:ModuleOfPointsInvariants} is $\Otilde(r d^2(\log q)^2 +
    \MM(d)\log q)$ bit operations.

  \item[(\ref{item:cl})] If we use multipoint evaluation
    (Algorithm~\ref{algo:multieval}) and the model and primitives of
    Caruso and Le Borgne for computing $\MphiT$, the cost of
    Algorithm~\ref{algo:ModuleOfPointsInvariants} is $\Otilde(rd\log q +
    d^{\frac {9 - \omega}{5 - \omega}} (\log q)^2)$ bit operations. With
    respect to the variable $d$, the cost of computing $\MphiT$ is therefore
    dominant.

  \item[(\ref{item:ku})] If we use multipoint evaluation
    (Algorithm~\ref{algo:multieval}) and the model and primitives of
    Kedlaya and Umans (item~\ref{item:ku}) in Lemma~\ref{lemma:cost-matrix},
    the total cost of Algorithm~\ref{algo:ModuleOfPointsInvariants} is
    $\Otilde(d(\log q)^2 + \MM(d) \log q) + (rd^2\log q)^{1+o(1)}$ bit
    operations.

\end{enumerate}
More generally, we can always assume that $\SMA(d) \in \MM(d)$ (the na\"ive
method for multiplying two Ore polynomials yields $\SMA(d) = d^2$). If we fix
$\log q$ and use $\Otilde$ for clarity, the total cost of
Algorithm~\ref{algo:ModuleOfPointsInvariants}, as per
Corollary~\ref{cor:cost-points}, is $\Otilde(r + \MM(d))$ arithmetic operations
and $\Otilde(\SMF(d))$ applications of the Frobenius endomorphism. In a model
where the cost of applying Frobenius is no higher than the cost of other
arithmetic operations, this is $\Otilde(r + \MM(d))$ arithmetic operations, and
the computation of the Frobenius normal form dominates.

Next, we investigate the cost of computing the invariant factors and a Frobenius
decomposition of $\phi(K)[a]$, $a\in \Fq[T]$, knowing the invariant factors and
a Frobenius decomposition of $\phi(K)$. As before, write $d_a \coloneqq
\dim_\Fq \phi(K)[a]$, and $\ell_a \coloneqq \# I_a$ for the number of invariant
factors of $\phi(K)[a]$. As discussed in \S~\ref{subsec:div-chains}, the cost
of Algorithm~\ref{algo:TorsionFromModuleOfPoints} (see
Proposition~\ref{prop:inv-from-inv}) is either $O(\PM(\deg(a)) + \PM(d)\log d +
d_a d^2)$ operations in $\Fq$ using a direct method, or $O(\PM(\deg(a)) +
\PM(d)\log d + \ell_a \MM(d))$ operations in $\Fq$ using the primitive of
Storjohann for polynomial-matrix evaluation~\cite{storjohann_algorithms_2000}.
The first method is preferred when $d_a$ is small with respect to~$d$;
otherwise, the second method should be used. In particular, we see that if
the rank is large relative to $\deg(a)$ or $d_a$ is small relative
to $d$, then it is cheaper to compute the invariant factors and Frobenius
decomposition of $\phi(K)[a]$ by reusing those of $\phi(K)$.

\subsection{The general case}
\label{subsec:comput-general}

For the general case, we apply the results of \S~\ref{subsection:general-case}.
Let~$A$ be a function ring. Then $A$ is a finitely generated $\Fq$-algebra. The curve
$C$ and the function ring $A$ can be represented in many ways. Here, we assume
that $A$ is given as the quotient $\Fq[T_1,\dots, T_m]/(P)$, for some
polynomial $P$, and we write $g_i \coloneqq T_i \mod P$, $1\leqslant i
\leqslant n$. Let $\phi$ be a Drinfeld $A$-module over $(K, \gamma)$ and $u$
a morphism whose domain is $\phi$. Our goal is to compute the invariant
factors of $\phi(\ker_K(u))$. 
We follow \S~\ref{subsection:general-case}, with $V = \ker_K(u)$. For each $1 \leqslant
k \leqslant n$, let $M_i$ be the matrix (w.r.t the fixed basis $\E$) of the
$\Fq$-linear endomorphism $\phi_{g_i}$ acting on $\ker_K(u)$. Let $d_u
\coloneqq \dim_\Fq(\ker_K(u))$. Then we have an isomorphism of $A$-modules
\[
  \phi(\ker_K(u))\simeq A^{d_u} / \Im(M_\chi),
\]
where $M_\chi$ is the vertical block matrix of blocks $(g_k \Id -
M_k)_{1\leqslant k \leqslant n}$ as in Equation~\eqref{eq:stacking}. Computing
the invariant factors of $\phi(\ker_K(u))$ then amounts to a simple sequence of
steps:
\begin{enumerate}

  \item Computing the matrices $M_1,\dots, M_m$, and the matrix $M_\chi$;
  \item Computing the Fitting ideals $\Fitt_1(M),\dots,\Fitt_d(M)$\;
  \item Computing the ideals $\d_i \coloneqq \Fitt_{d-i}(M)/\Fitt_{d+i-1}(M)$,
    for $1\leqslant i \leqslant d$.
\end{enumerate}
By Lemma~\ref{lemma:inv-fit}, the ideals $(\d_i)_{1\leqslant i\leqslant d}$
are the invariant factors of $\phi(\ker_K(u))$.





%
%
%
%
%
%
%
%
%

\section{Deciding when the torsion is rational}
\label{sec:deciding-rational}

We conclude with a problem closely related to the computation
of rational torsion submodules, namely determining all $a \in \Fq[T]$ for which $\phi[a]$
is rational, where $\phi$ is a Drinfeld $\Fq[T]$-module over a finite field
$K/\Fq$ with $\Fq$-dimension $d$. While all the points of $\phi(K)$ are torsion
points (they are of $\chi_\phi(1)$-torsion, where $\chi$ is the \emph{characteristic
polynomial of the Frobenius endomorphism}), the existence of a non-constant $a
\in \Fq[T]$ for which the $a$-torsion is rational is not guaranteed.

\subsection{Anderson motives}
\label{subsubsec:anderson-motives}

First, we briefly introduce \emph{Anderson $\Fq[T]$-motives}; see
\cite{anderson_t-motives_1986, van_der_heiden_weil_2004,
papikian_drinfeld_2023, armana_computational_2025} for a general introduction.

\begin{definition}

  The \emph{Anderson $\Fq[T]$-motive} attached
  to a Drinfeld module $\phi$ over~$K$, denoted $\M(\phi)$, is the
  $K[T]$-module defined by
  \FunctionNoname
    {K[T] \times \Ktau}
    {\Ktau}
    {\left(\sum_{i=0}^n \lambda_i T^i, f(\tau) \right)}
    {\sum_{i=0}^n \lambda_i f(\tau) \phi_{T^i}.}

\end{definition}

Importantly, $\M(\phi)$ is a free $K[T]$-module with
\emph{canonical basis} $\{1, \tau, \dots, \tau^{r-1}\}$. Moreover, $\M$ defines a
contravariant functor, as for any morphism of Drinfeld modules $u:\phi\to\psi$,
there exists a morphism of $K[T]$-modules $\M(u):\M(\psi) \to \M(\phi)$,
defined by $\M(u)(f) = fu$. The functor $\M$ is fully faithful, and we can
therefore represent Ore polynomials by tuples of $r$ coefficients in $K[T]$.
This representation is easily computed
\cite[Algorithm~1]{caruso_algorithms_2026}.

\subsection{Deciding when the torsion is rational}
Current methods for deciding rationality only handle a fixed $a \in \Fq[T]$.

\begin{itemize}

  \item For any $s$, write $K_s$ for the degree $s$ extension of $K$. To determine is the minimal field of definition (\emph{i.e.} the splitting field)
    of the $a$-torsion, one can use Anderson motives
    (\S~\ref{subsubsec:anderson-motives}). Indeed, $\phi(K_s)[a] = \phi[a]$ if
    and only if the Frobenius endomorphism of $K$ is the identity on $\phi[a]$.
    Using the correspondence between Drinfeld modules and Anderson motives
    via the functor $\M$, this is equivalent to $\M(\tau^{sd})$ being
    the identity on the quotient module $\M(\phi)/\M(\phi)\phi_a$ (see
    \cite[\S~3.6]{papikian_drinfeld_2023} for details). We can effectively
    write $\M(\tau^d)$ as a matrix $M \in (\Fq[T]/(a))^{r\times r}$. The
    minimal $s$ such that $\phi(K_s)[a] = \phi[a]$ is then the minimal $s$ such
    that $M^s$ is the identity. However, computing $s$ seems computationally
    hard. For example, when $\Fq[T]/(a)$ is a field, finding the idempotency
    order of a matrix requires finding the multiplicative order elements in
    $\Fq[T]/(a)$. As of now, the best methods to effect this require factoring an
    integer of the form $q^n - 1$ for some $n$.

  \item If $a$ is given, it is easy to know if the
    $a$-torsion is rational. If $h$ is the $\tau$-valuation of $\phi_a$, then
    the $a$-torsion is rational if and only if $\phi_a$ right-divides
    $\tau^h(\tau^d - 1)$.

\end{itemize}

These two methods determine rationality of the torsion for
individual $a \in \Fq[T]$, but are unable to find all $a \in \Fq[T]$ with rational $a$-torsion. We introduce a method that accomplishes this, beginning with the separable case. Consider the Anderson motive $\M(\phi)$ with its canonical
$K[T]$-basis $\{1, \dots, \tau^{r-1}\}$. As $K$ has finite
$\Fq$-dimension $d$, the Anderson motive $\M(\phi)$ is free over $\Fq[T]$.
Let $p\in\Fq[T]$ be the monic generator of the kernel of the $\Fq$-algebra morphism $\gamma: \Fq[T] \rightarrow K$. Let $a$ be
coprime to $p$, so $\phi_a$ is separable. Then the $a$-torsion is
$K$-rational if and only if $\phi_a$ right-divides $\tau^d - 1$. As the
$\Fq[T]$-coordinates of $\phi_a$ are simply $(a, 0,\dots)$, we have the
following proposition.

\begin{proposition}
\label{prop:decide-rational-torsion}

  Let $g_\phi$ be the gcd of the $\Fq[T]$-coordinates of $\tau^d - 1 \in
  \M(\phi)$. If $a$ is coprime to $p$, then the $a$-torsion of $\phi$ is rational if and only
  if $a$ divides $g_\phi$.

\end{proposition}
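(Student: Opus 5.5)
The plan is to translate rationality of the $a$-torsion into a right-divisibility statement in $\Ktau$, then into a divisibility statement inside the $\Fq[T]$-module $\M(\phi)$, and finally to read it off on coordinates.

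\emph{Step 1: from rationality to right divisibility.} Since $a$ is coprime to $p$, the constant coefficient $\gamma(a)$ of $\phi_a$ is nonzero. Hence $\phi_a$ is separable, so $\phi[a]$ has exactly $q^{\deg_\tau(\phi_a)}$ elements. The $a$-torsion is $K$-rational if and only if $\phi[a]\subset K=\ker(\tau^d-1)$.

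\begin{itemize}
  \item If $\phi_a$ right-divides $\tau^d-1$, this inclusion is immediate.
  \item Conversely, perform the right Euclidean division $\tau^d-1=\alpha\phi_a+\beta$ with $\deg_\tau\beta<\deg_\tau\phi_a$ (item~\ref{item:euclidean} of \S~\ref{back:ore}). Then $\beta$ vanishes on $\phi[a]$, which has $\Fq$-dimension $\deg_\tau\phi_a>\deg_\tau\beta$. Since a nonzero Ore polynomial has kernel of dimension at most its $\tau$-degree, we get $\beta=0$.
\end{itemize}

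This is where separability is essential, and I expect it to be the only delicate step.

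\emph{Step 2: to the motive.} By definition of the $K[T]$-action on $\M(\phi)$, the left ideal $\Ktau\phi_a$ equals $a\cdot\M(\phi)$. Indeed, $f\phi_a=a\cdot f$ for every $f\in\Ktau$. So $\phi_a$ right-divides $\tau^d-1$ if and only if $\tau^d-1\in a\M(\phi)$, viewing $\M(\phi)$ as an $\Fq[T]$-module.

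\emph{Step 3: coordinates.} The module $\M(\phi)$ is free over $K[T]$ with basis $1,\tau,\dots,\tau^{r-1}$, hence free over $\Fq[T]$ with basis $(\varepsilon_j\tau^i)_{j,i}$. We choose $\E$ with $\varepsilon_1=1$, so that $\phi_a=a\cdot 1$ has coordinates $(a,0,\dots,0)$, consistent with the statement. Because we work with a basis of a free module over the PID $\Fq[T]$, an element lies in $a\M(\phi)$ if and only if $a$ divides each of its coordinates. Applied to $\tau^d-1$, this holds if and only if $a$ divides their gcd $g_\phi$. Chaining the three equivalences proves the proposition.
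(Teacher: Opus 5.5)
Your proof is correct and follows the same route as the paper: since $\gamma(a)\neq 0$, $\phi_a$ is separable, so rationality of $\phi[a]$ is equivalent to $\phi_a$ right-dividing $\tau^d-1$, and because $f\phi_a = a\cdot f$ in $\M(\phi)$, this is the same as $\tau^d-1 \in a\M(\phi)$, which can be read off on the $\Fq[T]$-coordinates. Where the paper only asserts these two steps, you prove them: the converse direction via Euclidean division and the kernel-dimension bound, and the identity $\Ktau\phi_a = a\M(\phi)$.
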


In other words, the element $g_\phi$ is the lcm of all $a\in\Fq[T]$ coprime to
$p$ for which the $a$-torsion is rational. This invariant can also be recovered
using the invariant factors of $\phi(K)$, and the fact that $\phi[a]\simeq
(\Fq[T]/(a))^r$, but 
we present a more direct and efficient
approach.

Note that Proposition~\ref{prop:decide-rational-torsion} does not hold if $a$
is a multiple of $p$. Indeed, $\phi_a$ is inseparable in this case, and so would
be all its multiples. However, $\tau^d - 1$ is separable. To determine the maximum
$s$ such that the $p^s$-torsion is rational, one can however proceed as
follows.

\begin{enumerate}

  \item Compute the product $n_\phi$ of the invariant factors of $\phi(K)$. One
    can obtain $n_\phi$ directly from the characteristic polynomial of
    the Frobenius endomorphism~$\chi_\phi$, as $n_\phi = \chi_\phi(1)$ (see
    \cite[Appendix~A]{caruso_algorithms_2026} for a review of methods for computing $\chi_\phi$). 
    Indeed, any $a\in\Fq[T]$ such that
    $\phi[a]$ rational, is a divisor of $n_\phi$.

  \item Compute the $p$-adic valuation $v$ of $n_\phi$. This can be done for a
    cost of $O(\PM(\deg(n_\phi))\log(\deg(n_\phi)/\deg(p)))$ operations in
    $\Fq$ using the \emph{divide and conquer} method of \emph{radix conversion}
    \cite[\S~9.2]{von_zur_gathen_modern_2013}.

  \item The desired integer $s$ is such that $s \leqslant v$. One can find it
    by trying all possibilities or by implementing a more sophisticated search
    algorithm.

\end{enumerate}

\begin{remark}

  The cost of computing $g_\phi$ only depends on the size of the input and not
  on the size of the output. Said cost is polynomial. By contrast, we do not
  know any efficient method to compute the analogue of $g_\phi$ for an elliptic
  curve over a finite field.

\end{remark}

The computation of the invariant $g_\phi$ from $n_\phi$, along with the torsion
rationality test, were also implemented in SageMath and demonstrated in the
\emph{Jupyter} notebook mentioned in the introduction.

\section*{Conclusion}


All the algorithms presented herein are deterministic and work well in full generality. Nevertheless, there may be room for improvement.
\begin{enumerate}

  \item In light of Algorithm~\ref{algo:TorsionFromModuleOfPoints}, one might ask whether it could be profitable to compute the invariant factors and a Frobenius
    decomposition of $\phi(\ker_K(u))$, for some Drinfeld module $\phi$ and
    isogeny $u$, by reusing the invariant factors and Frobenius decomposition
    of $\phi(K)$. One would first need to find a matrix representation of $u$ as an $\Fq[T]$-morphism (\emph{e.g.} a presentation matrix), and not
    simply as an $\Fq$-morphism. The cost of producing such a representation
    is at least $O(\MM(d))$ operations in $d$, which is not better (in
    $d$) than the estimate of Proposition~\ref{prop:inv-from-inv}. Ultimately, representing
    isogenies as Ore polynomials is straightforward, but not necessarily the
    best representation for computations.

  \item In Algorithm~\ref{algo:TorsionFromModuleOfPoints}, we compute
    quantities of the form $P(M)x$, where $P$ is a polynomial of $\Fq[T]$ of
    degree $n \leqslant d$, $M$ is a $d$-by-$d$ matrix with entries in $\Fq$,
    and $x$ is a vector. The assumption $n \leqslant d$ is critical for obtaining a
    better complexity (using Storjohann's method
    \cite[Chapter~9]{storjohann_algorithms_2000}), as without it the best
    method to compute $P(M)$ is that of Paterson and Stockmeyer
    \cite{paterson_number_1973}, for a cost of $O(\sqrt{n} \MM(d) + n d^2)$
    operations in $\Fq$. The Paterson-Stockmeyer method is optimal in the
    general case. This begs the question of adapting the Paterson-Stockmeyer method (which
    is itself a direct adaptation of the Horner method \cite{horner}) for
    computing $P(M)x$ without computing $P(M)$. It seems that a
    fundamental obstruction is the absence of a relevant vector product.
    Current workarounds involve a substantial number of precomputations, resulting in an
    identical or worse complexity than Storjohann's method. This raises the
    question whether there exists a method for computing $P(M)x$ (in full
    generality) whose cost is
    $O(nd + \sqrt{n}d^2)$ (cost optimal in $d$) operations in $\Fq$.

  \item Algorithm~\ref{algo:TorsionFromModuleOfPoints} also requires
    computations related to divisibility chains. Specifically, we need to
    compute elements of the form $\phi_{\gamma_i}(x_i)$, where the $\gamma_i$
    form a divisibility chain. We have not found a way to leverage said
    divisibility chain in a way that significantly improves the worst-case
    complexity, as for any two $i, j$ with $i<j$, knowing
    $\phi_{\gamma_i}(x_i)$, $\gamma_j/\gamma_i$ and $x_j$ does not \emph{a
    priori} help to find $\phi_{\gamma_j}(x_j)$. Further investigation in these
    problems, irrespective of their application to Drinfeld modules, is
    warranted. In particular, probabilistic methods could help in some cases,
    \emph{e.g.} when the invariant factors are smooth.

    \item The method of \ref{subsec:comput-general} also leaves open questions. The assumption that $A$ is given as a quotient of $\Fq[T_1,\dots, T_m]$ enables the use of Gröbner
  bases techniques for the computation of the invariant factors of $\ker_K(u)$. Exploring and implementing this direction may be worthwhile.

\end{enumerate}

Finally, we reflect on the use of finite fields. This is
the setting for most practical applications, such as computer algebra
\cite{doliskani_drinfeld_2021,bastioni_characteristic_2025} and coding theory
\cite{bastioni_optimal_2024, micheli_rank_2026}). If $\phi$ is a Drinfeld module over a finite field $K$, then $\phi(K)$ is finitely generated, both when $A = \Fq[T]$ and
for general function fields. We can leverage the structure of $A$ as a finitely
generated $\Fq$-algebra and encode the action of the generators using
matrices. This is not possible for elliptic curves, where the role of $A$
is played by $\ZZ$. If the base field $K$ is a function field, then the module
$\phi(K)$ is not finitely generated and its non-torsion part is free with
infinite rank $\aleph_0$ \cite{poonen_local_1995}. As pointed out to us by
Caruso, the torsion part is known to be finite
\cite[Theorem~6.4.3]{papikian_drinfeld_2023}.
Anderson motives would likely be an important tool in the computation of these
torsion points. Even in the infinite
case (but with a suitable changes of variables), they facilitate a representation of an isogeny by a finite square polynomial matrix whose cokernel is isomorphic
to the kernel of the isogeny. The invariant factors of the cokernel can simply be
recovered by computing the Smith normal form of the matrix. For finite fields,
this strategy is often less efficient compared to our approach due to the costly
manipulation of polynomial matrices. However, in the general case, Anderson motives
simplify the problem by reducing it to classical computer algebra. There exists a monic separable
isogeny with minimal degree whose kernel is the torsion part of $\phi(K)$; finding this isogeny is a task for future work.

\section*{Acknowledgments}

We warmly thank Xavier Caruso for insightful discussions. Antoine Leudière is
supported in part by the Pacific Institute for the Mathematical Sciences.
Renate Scheidler is supported by the Natural Sciences and Engineering Research
Council of Canada (NSERC), funding RGPIN-2025-03992.

\printbibliography

\end{document}